\tikzset{shorten <>/.style={shorten >=#1,shorten <=#1}}
\tikzset{every picture/.prefix code=\DisableQuotes}
\tikzset{spanmap/.style={
            decoration={markings,
            mark= at position 0.5 with {
                  \node[transform shape] (tempnode) {$|$};
                  }
              },
              postaction={decorate}
}
}
\let\oldtheorem\newtheorem
\RenewDocumentCommand{\newtheorem}{s m o m O{}}{%
\IfBooleanTF{#1}%
{\oldtheorem{#2}{#4}}%
{\IfNoValueTF{#3}{\oldtheorem{#2}{#4}[#5]}%
{\newaliascnt{#2}{#3}%
\oldtheorem{#2}[#2]{#4}%
\aliascntresetthe{#2}}}}
\newtheorem{theorem}{Theorem}[section]
\newtheorem{proposition}[theorem]{Proposition}
\newtheorem{lemma}[theorem]{Lemma}
\theoremstyle{definition}
\newtheorem{definition}{Definition}[section]
\theoremstyle{remark}
\newtheorem{remark}{Remark}[section]
\newtheorem{example}[remark]{Example}
\newcommand{\catname}[1]{\mathbf{#1}}
\begin{document}

\title{\textbf{Interpreting type theory in a quasicategory: a Yoneda approach}}
\author{El Mehdi Cherradi}
\affil{IRIF - CNRS - Universit\'e Paris Cit\'e \\MINES ParisTech - Universit\'e PSL}
\date{}

\maketitle

\begin{abstract}
We make use of a higher version of the Yoneda embedding to construct, from a given quasicategory, a tribe, as a subcategory of a well-behaved simplicial model category, that presents the same $(\infty,1)$-category as the former quasicategory. 
We then show that, when the quasicategory is locally cartesian closed, it is possible to further endow such a tribe with enough structure for it to provide a model of Martin-Löf type theory with $\Pi$-types.
This mapping procedure restricts so that elementary higher topoi yield models of homotopy type theory.
\end{abstract}

\tableofcontents

\newpage 

\section*{Introduction}
\addcontentsline{toc}{section}{Introduction}

The connection between logics and categories is important and has been studied, notably using a fibrational approach (see \cite{jacobs1999}), which has resulted in categorical notions such as that of comprehension categories to provide a categorical account for the syntax of (dependent) type theory. From a semantical point-of-view, the correspondence ranges from boolean algebras as models for propositional logic, to locally cartesian closed categories for (extensional)  Martin-Löf type theory (\cite{hofmann1994}).

At the same time, the study of categories has been shaped by the ideas of weakening some axioms given as equations by shifting from an equality to an equivalence (for instance as for bicategories), which has led to make use of the general theory of abstract homotopy in order to develop homotopy coherent versions of many categorical constructions. $(\infty,1)$-categories (\cite{joyal2008}) consist, for instance, in a homotopy-enabled version of usual categories (referred to as $1$-categories). The similar trend that has affected type theory led to homotopy type theory, where a proof of equality between terms is thought as a path in a space, hence providing types with the structure of spaces (or $\infty$-groupoids) from the point of view of homotopy theory.

Motivated by the $1$-categorical result on models of extensional Martin-Löf type theory mentioned previously, the natural question that arose about the link between $(\infty,1)$-categories and homotopy type theory has recently found some answers extending this logical correspondence to quasicategories, through the simplicial model for homotopy type theory (\cite{kl2012}) and more generally the internal language of $(\infty,1)$-topoi, the higher version of  Grothendieck topoi (\cite{shulman2019}).

Among the different models of $(\infty,1)$-categories, quasicategories have the specific property to come with an inbuilt notion of homotopy, making the usual categorical constructions (for instance limits) compatible with homotopy. %
However, while this makes quasicategories a very convenient framework to talk about homotopy coherent properties, there is a mismatch with the usual way to the interpret the syntax of type theory, using strict categorical structures (as reviewed in \cite{curien2014}), even in the case of homotopy type theory.

This mismatch can be thought of under the angle of coherence. Indeed, morphisms (which are central to a categorical interpretation of type theory) in a quasicategory exist, but there is no internal way to reason about them strictly, as everything works up to homotopy: for instance, composition of morphisms is "only" defined up to homotopy. Therefore, if we want to deal with morphisms and their composites, we would need to compose them in a coherent fashion, so that associativity and unitality hold strictly. This looks difficult, if not impossible, to achieve directly in the framework of quasicategories. This coherence problem is reminiscent of the splitting procedure for a Grothendieck fibration introduced by \cite{benabou1980}, see also \cite{giraud}, whose importance for solving the usual coherence problem arising from pullbacks so as to model substitution in type theory has been noticed, see \cite{hofmann1994}. %

Unsurprisingly, interpreting type theory in a higher topos involves a rigidification step to make sense of the expected strict rules of type theory. This can account for the fact that the current results only apply to $(\infty,1)$-categories which can be presented by a model category: such a model category works as rigidified substitute of, let say, a quasicategory, and presents the very same data in a $1$-categorical way (which is more directly suitable to interpret type theory).

Following Bénabou's idea, we may want to rely on a categorical device with inbuilt composition to replace morphisms (and objects). But this is precisely the role played by the slice categories, or, equivalently, the representable presheaves! Indeed, the Yoneda embedding allows us to see any small category $\mathbf{C}$ as a full subcategory of $\catname{Set}^{\mathbf{C}^{op}}$ (or of $\catname{Cat}_{/\mathbf{C}}$ through the Grothendieck construction).

This paper puts this idea at work in the setting of higher categories. This gives rise to a general construction to rigidify a quasicategory (that is to turn it into an equivalent, as an $(\infty,1)$-category, simplicial category). This rigidification can be exploited to interpret in full generality a large fragment MLTT in any quasicategory. This gives access to a type-theoretic account of the quasicategory and its logical properties. Although the latter is the true aim of the paper, the rigidification process is interesting on its own as it gives a more concrete alternative to the rigidification functor $\mathfrak{C}$ defined by Lurie (\cite{lurie2009}) as the left adjoint to the simplicial nerve $\mathbf{N}_\Delta$ (see also \cite{dugger2011}). 

Our main result consists in providing models of Martin-Löf type theory for a more general class of quasicategories (the so-called internal logic of a quasicategory) than in the current literature. Having shown that structure such as limits, dependent products and object classifiers in a quasicategory can be carried over to a corresponding simplicial category to model the associated type-theoretic notions, we most notably observe that elementary higher topoi (that is, locally cartesian closed quasicategories with finite limits and colimits, a subobject classifier and enough universes, see \cite{rasekh2018eth}) provide models for homotopy type theory. 

\section*{Overview of the paper}
\addcontentsline{toc}{section}{Overview of the paper}

In the first section, we construct the rigidification of a given quasicategory, and observe that it is indeed equivalent, as an $(\infty,1)$-category, to this quasicategory (precisely the simplicial nerve of the rigidification is related to the quasicategory which we start with by a zig-zag of equivalences of quasicategories).

In the second section, we study the case where the quasicategory we start from is locally cartesian closed: we show that we can add to the rigidification some strict structure (mainly pullbacks and dependent products along fibrations as well as path objects) while preserving the equivalence of $(\infty,1)$-categories. At this point, the rigidification forms a $\pi$-tribe, and we can see in Theorem \ref{t1} that the internal logic of a locally cartesian closed category is a type theory with dependent sums and products, and intensional identity types.

The third section investigates the case of univalent universes, showing how they can be carried over to the rigidification of the quasicategory.

The fourth section briefly discusses a strategy to use initial structures in the quasicategory to construct (higher) inductive types.

In the fifth section, we are able to conclude in Theorem \ref{main} that elementary higher topoi further model homotopy type theory (with univalent universes and many inductive types).

\section{A Yoneda-style rigidification}

Quasicategories and (fibrant) simplicial categories are known to provide equivalent models for $(\infty,1)$-categories. This is witnessed by a Quillen equivalence, where the right Quillen functor is the usual simplicial nerve (also called homotopy coherent nerve) of a simplicial category that we write $$\mathbf{N}_\Delta : \catname{Cat}_\Delta \to \catname{SSet}$$
\noindent
that goes from simplicially enriched categories (with the Bergner model structure) to simplicial sets (with the Joyal model structure).
We will write $\mathfrak{C}$ for its left adjoint, following Lurie's notation in \cite{lurie2009}.

From now on, in this chapter, we fix a quasicategory $\mathcal{C}$.
The unit $$\mathcal{C} \to \mathbf{N}_\Delta(\mathfrak{C}(\mathcal{C}))$$ is a categorical equivalence, that is a weak equivalence in the Joyal model structure for simplicial sets, or, more concretely, a functor that is fully faithful  (i.e, induces weak equivalences of simplicial sets on the hom-spaces) and that is essentially surjective on objects (every object in the codomain is equivalent to an object in the image).

We think of $\mathfrak{C}(\mathcal{C})$ as a rigidified version of $\mathcal{C}$ following Dugger's terminology, see \cite{dugger2011}.
Lurie showed that, in particular, there is a canonical equivalence of simplicial sets (with respect to the Quillen model structure) induced by the previous unit:

$$Hom_{\mathcal{C}}(x,y) \to Hom_{\mathbf{N}_\Delta(\mathfrak{C}(\mathcal{C}))}(x,y)$$

\noindent
However, the precise description of the hom-spaces $Hom_{\mathfrak{C}(\mathcal{C})}(x,y)$ is not very easy to work with, although a nice description relying on the notion of \textit{necklace} is spelled out in \cite{dugger2011}. 

As mentioned in the introduction, an alternative way is suggested by the following result, a simple consequence of the fibrational Yoneda lemma, that is, the Yoneda lemma as seen through the Grothendieck construction, which states that, for $\mathbf{C}$ a category and $F$ a discrete fibration over $\mathbf{C}$, the canonical map

$$Hom_{\catname{Cat}_{/\mathbf{C}}}(\mathbf{C}_{/x},F) \simeq F^{-1}(x)$$

is a natural bijection (see also Theorem 3.1 in \cite{streicher2018fibered}).

\begin{proposition}
\label{1ff}
 Let $\mathbf{C}$ be a small category. Then the functor $\mathbf{C} \to \catname{Cat}_{/\mathbf{C}}$ defined on objects by $x \mapsto \mathbf{C}_{/x}$ is fully faithful and thus exhibits $\mathbf{C}$ as a full subcategory of $\catname{Cat}_{/\mathbf{C}}$.
\end{proposition}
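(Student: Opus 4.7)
The proof I would write is essentially a translation of the Yoneda lemma through the Grothendieck construction, exploiting the fact that $\mathbf{C}_{/x}$ has a canonical terminal object, namely $(x, \mathrm{id}_x)$.

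First, I would make the action of the functor explicit. Sending $x \mapsto \mathbf{C}_{/x}$ extends to morphisms by post-composition: any $f : x \to y$ in $\mathbf{C}$ gives rise to a functor $f_* : \mathbf{C}_{/x} \to \mathbf{C}_{/y}$ defined on objects by $(a, g : a \to x) \mapsto (a, f \circ g)$ and on morphisms by the identity (a morphism $h : (a, g_1) \to (b, g_2)$ in the slice is just an arrow $h : a \to b$ of $\mathbf{C}$ satisfying $g_2 h = g_1$, and the required equation $(fg_2)h = fg_1$ follows). Functoriality in $f$ and the fact that $f_*$ lies over $\mathbf{C}$ (both projections send the object $(a, g)$ to $a$ and the morphism $h$ to itself) are then immediate.

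For fully faithfulness, I would construct an inverse to $f \mapsto f_*$. Given any functor $F : \mathbf{C}_{/x} \to \mathbf{C}_{/y}$ over $\mathbf{C}$, the image of the terminal object $(x, \mathrm{id}_x)$ must, by the "over $\mathbf{C}$" condition, have underlying object $x$, so $F(x, \mathrm{id}_x) = (x, f)$ for a uniquely determined arrow $f : x \to y$. This provides a map $\catname{Cat}_{/\mathbf{C}}(\mathbf{C}_{/x}, \mathbf{C}_{/y}) \to \mathbf{C}(x, y)$ that is clearly a left inverse to $f \mapsto f_*$.

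The key step is verifying it is also a right inverse, i.e.\ $F = f_*$. For an arbitrary object $(a, g) \in \mathbf{C}_{/x}$, observe that $g$ itself is the unique arrow $(a, g) \to (x, \mathrm{id}_x)$ in $\mathbf{C}_{/x}$ (since $(x, \mathrm{id}_x)$ is terminal). Applying $F$ and using that $F$ is the identity on underlying $\mathbf{C}$-morphisms, we obtain an arrow $g : F(a, g) \to (x, f)$ in $\mathbf{C}_{/y}$, whose existence forces $F(a, g) = (a, f \circ g) = f_*(a, g)$. Agreement of $F$ and $f_*$ on morphisms is automatic since both act as the identity on underlying $\mathbf{C}$-arrows. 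This establishes the bijection $\mathbf{C}(x, y) \cong \catname{Cat}_{/\mathbf{C}}(\mathbf{C}_{/x}, \mathbf{C}_{/y})$.

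There is no real obstacle here: the content is the universal property of the terminal object combined with the rigidity imposed by the "over $\mathbf{C}$" condition. The only point to be mindful of is that $\catname{Cat}_{/\mathbf{C}}$ allows arbitrary functors over $\mathbf{C}$ between slices (not just cartesian or fibered ones), but this extra flexibility is not used by any $F$ since the terminal object argument pins $F$ down completely.
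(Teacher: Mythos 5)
Your proof is correct: the strictness of the ``over $\mathbf{C}$'' condition forces any $F$ to act as the identity on underlying arrows, and the terminal object $(x,\mathrm{id}_x)$ then pins down $F$ completely, giving the bijection $\mathbf{C}(x,y)\cong\catname{Cat}_{/\mathbf{C}}(\mathbf{C}_{/x},\mathbf{C}_{/y})$. It is, however, a different route from the one the paper intends. The paper does not spell out a proof but presents the statement as ``a simple consequence of the fibrational Yoneda lemma,'' i.e.\ it factors the argument as: $\mathbf{C}_{/x}\to\mathbf{C}$ is the discrete fibration (category of elements) associated to the representable presheaf $\mathbf{C}(-,x)$; the Grothendieck construction embeds $\catname{Set}^{\mathbf{C}^{op}}$ fully faithfully into $\catname{Cat}_{/\mathbf{C}}$ (discrete fibrations form a full subcategory, since every functor over $\mathbf{C}$ between them is automatically a map of fibrations); and then the ordinary Yoneda lemma finishes the job. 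Your direct verification is in substance the standard proof of that fibrational Yoneda statement specialized to representables, so nothing is lost logically, and it has the merit of being completely self-contained. What the paper's factorization buys is exactly what it exploits afterwards: the decomposition into ``Yoneda embedding'' plus ``(un)straightening/Grothendieck construction'' is the template that is transported to the quasicategorical setting in the proof of Proposition~\ref{rigidification}, whereas the terminal-object trick does not generalize as directly (in the $\infty$-setting one works with right fibrations and fiberwise equivalences rather than strict equalities of functors over the base).
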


We therefore propose, as a first step, the following construction:

\begin{definition}
  We define $\overline{\mathcal{C}}_{\mathbf{fb}}$ as the full simplicial subcategory of $\textbf{SSet}_{/\mathcal{C}}$ with objects the slice categories $\mathcal{C}_{/x} \to \mathcal{C}$ equipped with the projection, for every object $x$ of $\mathcal{C}$. Note that this is a subcategory of fibrant-cofibrant objects with respect to the contravariant model structure on $\textbf{SSet}_{/\mathcal{C}}$.
\end{definition}

The construction of $\overline{\mathcal{C}}_{\mathbf{fb}}$ provides a rather concrete rigidified version of $\mathcal{C}$, however the contravariant model structure (which corresponds in a sense to the projective global model structure on simplicial presheaves) lacks some important properties that we will need in the rest of the chapter.

For this purpose, our preferred alternative will be the following variation. This allows us to trade the "concreteness" of $\overline{\mathcal{C}}_{\mathbf{fb}}$ for a better homotopy behavior (given by the global injective model structure on simplicial presheaves, which is an \textit{excellent} model structure in the sense of \cite{ls2020}).

\begin{definition}
 For every object $x$ of $\mathcal{C}$, which we also think of as an object of $\mathfrak{C}(\mathcal{C})$, we write $\overline{y}(x)$ for an injectively fibrant replacement (hence a fibrant-cofibrant object) in $\mathbf{SSet}^{\mathfrak{C}(\mathcal{C}^{op})}$ of the representable simplicial functor $Hom(-,x)$.
 We write $\overline{\mathcal{C}}$ for the full subcategory of $\mathbf{SSet}^{\mathfrak{C}(\mathcal{C}^{op})}$ spanned by the $\overline{y}(x)$.
 
\end{definition}

We shall now prove the following expected result:%

\begin{proposition}
\label{rigidification}
 The simplicial nerve $\mathbf{N}_\Delta(\overline{\mathcal{C}}_{\mathbf{fb}})$ is equivalent to $\mathcal{C}$.
 Similarly, the simplicial nerve of $\overline{\mathcal{C}}$ is equivalent to $\mathcal{C}$.
\end{proposition}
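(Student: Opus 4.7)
The plan is to show that $\overline{\mathcal{C}}_{\mathbf{fb}}$ has, up to equivalence, the same objects and hom-spaces as $\mathcal{C}$, and then package this into an equivalence of quasicategories. First, each slice projection $\mathcal{C}_{/x} \to \mathcal{C}$ is a right fibration and therefore fibrant in the contravariant model structure on $\mathbf{SSet}_{/\mathcal{C}}$; it is trivially cofibrant since the cofibrations in that model structure are the monomorphisms. Hence $\overline{\mathcal{C}}_{\mathbf{fb}}$ is a simplicial subcategory of fibrant-cofibrant objects, and its simplicial hom-sets compute the associated derived mapping spaces.

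The heart of the argument is the computation
$$\overline{\mathcal{C}}_{\mathbf{fb}}(\mathcal{C}_{/x}, \mathcal{C}_{/y}) \simeq Hom_{\mathcal{C}}(x,y),$$
which I would deduce from the $\infty$-categorical Yoneda lemma for right fibrations (the contravariant incarnation of Lurie's straightening–unstraightening): for any right fibration $F \to \mathcal{C}$, the derived mapping space out of the representable $\mathcal{C}_{/x}$ is weakly equivalent to the fiber $F_x$. Specializing to $F = \mathcal{C}_{/y}$, the fiber over the vertex $x$ is one of the standard quasicategorical models of $Hom_{\mathcal{C}}(x,y)$. This runs in parallel with the $1$-categorical Proposition~\ref{1ff}, which is in fact recovered by passing to homotopy categories.

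To assemble these equivalences into an equivalence of quasicategories, I would construct a comparison simplicial functor $\Phi : \mathfrak{C}(\mathcal{C}) \to \overline{\mathcal{C}}_{\mathbf{fb}}$ sending $x \mapsto \mathcal{C}_{/x}$, or equivalently, by adjunction, a functor $\mathcal{C} \to \mathbf{N}_\Delta(\overline{\mathcal{C}}_{\mathbf{fb}})$. The construction is dictated by functoriality of the slice in its base point: a simplex $\Delta^n \to \mathcal{C}$ produces compatible morphisms between the slices $\mathcal{C}_{/x_i}$ in $\mathbf{SSet}_{/\mathcal{C}}$, and one sees this assembly as the unstraightening of the higher Yoneda embedding of $\mathcal{C}$. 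By construction $\Phi$ is essentially surjective, and by the hom-space computation it is fully faithful. Composed with the unit equivalence $\mathcal{C} \to \mathbf{N}_\Delta(\mathfrak{C}(\mathcal{C}))$, this yields the desired zig-zag.

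The main obstacle is justifying the hom-space computation in a sufficiently rigid, model-theoretic fashion: identifying the derived mapping space of the contravariant model structure between two slice right fibrations with an explicit simplicial model of $Hom_{\mathcal{C}}(x,y)$, and doing so compatibly with composition so that the resulting functor $\Phi$ is genuinely simplicial rather than merely exhibiting pointwise weak equivalences. Everything else is essentially formal once the Yoneda-style identification is in place.
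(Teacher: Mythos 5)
Your identification of the key computation is on target: the contravariant model structure is simplicial, the slices are bifibrant, and the Yoneda lemma for right fibrations identifies the derived mapping space $\overline{\mathcal{C}}_{\mathbf{fb}}(\mathcal{C}_{/x},\mathcal{C}_{/y})$ with the fiber of $\mathcal{C}_{/y}\to\mathcal{C}$ over $x$, a model of $Hom_{\mathcal{C}}(x,y)$. The genuine gap is in the step you yourself flag as "the main obstacle" and then treat as a routine verification: the construction of the comparison functor $\Phi:\mathfrak{C}(\mathcal{C})\to\overline{\mathcal{C}}_{\mathbf{fb}}$, $x\mapsto\mathcal{C}_{/x}$ (equivalently a map $\mathcal{C}\to\mathbf{N}_\Delta(\overline{\mathcal{C}}_{\mathbf{fb}})$). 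Your justification, that "a simplex $\Delta^n\to\mathcal{C}$ produces compatible morphisms between the slices $\mathcal{C}_{/x_i}$ in $\mathbf{SSet}_{/\mathcal{C}}$," is false as stated: an edge $f:x\to y$ of a quasicategory induces a map $\mathcal{C}_{/x}\to\mathcal{C}_{/y}$ over $\mathcal{C}$ only after choices of composites, and these choices cannot in general be made strictly compatible across all simplices. This is exactly the coherence problem the paper is about, and the text explicitly remarks, just after the statement of Proposition~\ref{rigidification}, that the assignment $x\mapsto\mathcal{C}_{/x}$ does not extend to a functor of quasicategories in any canonical way. Pointwise equivalences of hom-spaces do not yield an equivalence of quasicategories without a functor (or zig-zag of functors) inducing them, so as written your argument does not close.

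The paper circumvents precisely this obstruction by never constructing a direct functor. It invokes Lurie's span of equivalences $Fun(\mathcal{C}^{op},\mathcal{S})\leftarrow\mathbf{N}_\Delta((\mathbf{SSet}^{\mathfrak{C}(\mathcal{C}^{op})})^{\circ})\rightarrow\mathbf{N}_\Delta(\mathbf{RFib}(\mathcal{C}))$ (HTT 5.1.1.1), restricts all three vertices to the (essentially) representable objects, and uses the comparison map $\mathbf{St}(\mathcal{C}_{/x})(y)\to Hom_{\mathfrak{C}(\mathcal{C})}(y,x)$ of HTT 2.2.4.1 to see that unstraightening carries fibrant-cofibrant replacements of representables to right fibrations fiberwise (hence contravariantly) equivalent to the slices; the inclusion $\mathbf{N}_\Delta(\overline{\mathcal{C}}_{\mathbf{fb}})\hookrightarrow\mathbf{N}_\Delta(\mathbf{RFib}(\mathcal{C}))_{|repr}$ is then an equivalence, and the Yoneda embedding supplies the leg $\mathcal{C}\to Fun(\mathcal{C}^{op},\mathcal{S})_{|repr}$. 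If you want to salvage your outline, replace the direct construction of $\Phi$ by this zig-zag (which is also what your parenthetical "unstraightening of the higher Yoneda embedding" secretly requires, since unstraightening produces objects only equivalent to, not equal to, the slices); your hom-space computation then becomes a consistency check rather than the engine of the proof.
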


The assignment $x \mapsto \mathcal{C}_{/x}$ does not seem to extend to a functor of quasicategories (i.e., a morphism of simplicial sets) in a canonical way.

However, the proof of Proposition \ref{1ff} suggests that we should rely on both the $(\infty,1)$-Yoneda lemma, as well as the higher version of the Grothendieck construction (the (un)straightening construction). Both of these constructions are studied in detail in \cite{lurie2009}.

Actually, we will deduce the expected result stated in Proposition \ref{rigidification} directly from the following one, which is due to Lurie (Proposition 5.1.1.1 in \cite{lurie2009}). 
We write $\mathcal{S}$ for the quasicategory of spaces (obtained by applying the simplicial nerve to the simplicial category of Kan complexes), $\mathbf{RFib}(\mathcal{C})$ for the full simplicial subcategory of $\mathbf{SSet}_{/\mathcal{C}}$ spanned by the right fibrations (which are exactly the fibrant-cofibrant objects for the contravariant model structure), and $(\mathbf{SSet}^{\mathfrak{C}(\mathcal{C}^{op})})^\circ$ for the full subcategory of $\mathbf{SSet}^{\mathfrak{C}(\mathcal{C}^{op})}$ (the contravariant simplicial functors from $\mathfrak{C}(\mathcal{C})$ to $\mathbf{SSet}$) spanned by the objects that are fibrant-cofibrant for the global projective model structure. The statement is then as follows:

\begin{proposition}
 There is a span of equivalences 
 
 $$Fun(\mathcal{C}^{op}, \mathcal{S}) \leftarrow \mathbf{N}_\Delta((\mathbf{SSet}^{\mathfrak{C}(\mathcal{C}^{op})})^\circ) \rightarrow \mathbf{N}_\Delta(\mathbf{RFib}(\mathcal{C}))$$
\end{proposition}

We will see that it is enough to study the restriction of these two maps to the full subcategory of $\mathbf{N}_\Delta((\mathbf{SSet}^{\mathfrak{C}(\mathcal{C}^{op})})^\circ)$ spanned by (fibrant-cofibrant replacements of) the representable simplicial functors $Hom_{\mathfrak{C}(\mathcal{C})}(-,x)$ (for $x$ an object of $\mathcal{C}$). Note that, here, we are abusing notations not to make distinctions between objects of a simplicial category and objects of its simplicial nerve, and, similarly, objects of a simplicial set $\mathbf{S}$ and objects of $\mathfrak{C}(\mathcal{S})$.

First, we recall the statement of two important results, which can also be found in \cite{lurie2009}:

\begin{theorem}
 There is a Quillen adjunction 

\[\begin{tikzcd}
	{\mathbf{SSet}_{/\mathcal{C}}} && {\mathbf{SSet}^{\mathfrak{C}(\mathcal{C}^{op})}}
	\arrow["{\mathbf{St}}", shift left=2, from=1-1, to=1-3]
	\arrow["{\mathbf{Un}}", shift left=2, from=1-3, to=1-1]
\end{tikzcd}\]

 \noindent
 between $\mathbf{SSet}_{/\mathcal{C}}$ equipped with the contravariant model structure  and $\mathbf{SSet}^{\mathfrak{C}(\mathcal{C}^{op})}$ equipped with the global projective model structure.
 The right (resp. left) adjoint is called the unstraightening (resp. straightening).
\end{theorem}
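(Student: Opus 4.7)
The plan is to construct the left adjoint $\mathbf{St}$ explicitly as a colimit-preserving extension of a combinatorially-defined map on simplices, deduce the existence of $\mathbf{Un}$ formally as its right adjoint, and then verify the two conditions characterising a Quillen adjunction.

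First, I would define straightening on simplices. The basic building blocks are maps $\sigma: \Delta^n \to \mathcal{C}$, and for each such $\sigma$ one defines a simplicial functor $\mathbf{St}(\sigma): \mathfrak{C}(\mathcal{C}^{op}) \to \mathbf{SSet}$ by a mapping-space construction in a suitable cone. Concretely, one uses the cosimplicial simplicial category $[n] \mapsto \mathfrak{C}(\Delta^n \star \{\infty\})$, so that $\mathbf{St}(\sigma)(c)$ records the space of ways to extend the cosimplex determined by $\sigma$ into a morphism targeting $c$ in the cone. Since every object of $\mathbf{SSet}_{/\mathcal{C}}$ is canonically a colimit over its category of simplices, one extends this assignment by left Kan extension to obtain $\mathbf{St}: \mathbf{SSet}_{/\mathcal{C}} \to \mathbf{SSet}^{\mathfrak{C}(\mathcal{C}^{op})}$, which is colimit-preserving by construction.

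Second, since $\mathbf{St}$ is a colimit-preserving functor between locally presentable categories, a right adjoint $\mathbf{Un}$ exists by the adjoint functor theorem and is given explicitly by $$\mathbf{Un}(F)_n = \coprod_{\sigma: \Delta^n \to \mathcal{C}} \mathrm{Nat}\bigl(\mathbf{St}(\sigma), F\bigr),$$ with structure map to $\mathcal{C}$ coming from the coproduct indexing and simplicial operators acting on the $\sigma$ component.

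Third, to check the Quillen property, I would verify that $\mathbf{St}$ sends generating (trivial) cofibrations of the contravariant model structure to (trivial) projective cofibrations. Since $\mathbf{St}$ preserves colimits, it suffices to check this on generators. The generating cofibrations $\partial\Delta^n \hookrightarrow \Delta^n$ over $\mathcal{C}$ go to latching-type inclusions between the corresponding straightenings, which are projective cofibrations by direct inspection, so the cofibration half reduces to a bookkeeping argument on simplices. For generating contravariant trivial cofibrations — essentially the appropriate horn inclusions, together with any auxiliary anodyne extensions needed — one must check that straightening produces both a projective cofibration (as above) and an objectwise weak equivalence of simplicial sets.

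The main obstacle is this last point: showing that $\mathbf{St}$ applied to a contravariant trivial cofibration is a pointwise weak equivalence. This cannot be done by naive inspection, because the simplicial functors $\mathbf{St}(\Lambda^n_i)$ and $\mathbf{St}(\Delta^n)$ are built from the necklace-style mapping spaces in $\mathfrak{C}$ of cones, which are themselves nontrivial. I would attack this either by a skeletal filtration refining each horn inclusion into a transfinite composition of pushouts of known projective trivial cofibrations, or, more efficiently, by exhibiting an explicit deformation retraction on each value $\mathbf{St}(\Lambda^n_i)(c) \hookrightarrow \mathbf{St}(\Delta^n)(c)$ coming from the contractibility of the relevant cones in $\mathfrak{C}(\Delta^n \star \{\infty\})$. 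I expect the bulk of the work to lie in setting up a convenient combinatorial description of these mapping spaces so that such retractions can be written down uniformly in $c$ and $\sigma$.
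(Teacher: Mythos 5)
Note first that the paper does not prove this statement at all: it is explicitly recalled from Lurie's \emph{Higher Topos Theory} (Theorem 2.2.1.2 there), so the relevant comparison is with Lurie's proof. Your construction of $\mathbf{St}$ via the cone construction $\mathfrak{C}(\Delta^n \star \{\infty\})$, left Kan extension along the category of simplices, and the formal adjoint formula for $\mathbf{Un}$ is indeed the same construction Lurie uses, and the cofibration half (boundary inclusions going to cellular/projective cofibrations) is a correct, if compressed, account of the easy half.

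The genuine gap is the last step, and it is not merely an unfinished computation but a structural problem with the stated reduction. The contravariant model structure is combinatorial, hence cofibrantly generated in the abstract, but its trivial cofibrations admit no explicit generating set of the kind you invoke: the right horn inclusions $\Lambda^n_i \subset \Delta^n$ ($0 < i \le n$) over $\mathcal{C}$ are contravariant trivial cofibrations, but they do not generate all of them (right anodyne maps form a strictly smaller class, and fibrations of the contravariant structure are not detected by lifting against them except between fibrant objects). So "check the generators, essentially the horns plus auxiliary anodyne maps" does not by itself yield that $\mathbf{St}$ is left Quillen. One must either invoke a pseudo-generation criterion of the type Lurie sets up in HTT A.2.6 (preserve cofibrations and send the defining class of localizing maps to weak equivalences), or prove directly that $\mathbf{St}$ carries \emph{all} contravariant equivalences to projective equivalences; either way this is where the real content lies. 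Moreover, even the statement you defer --- that $\mathbf{St}(\Lambda^n_i)(c) \to \mathbf{St}(\Delta^n)(c)$ is a weak equivalence, uniformly in $c$ --- is the technical heart of Lurie's Chapter 2 (his analysis of straightenings of anodyne maps), and there is no known "explicit deformation retraction" shortcut; the known arguments go through a careful combinatorial analysis of the mapping spaces (necklace-style or via marked models). As it stands, your proposal sets up the right objects but proves the theorem only up to its hardest ingredient, and the route proposed for that ingredient would need to be repaired as above before it could be carried out.
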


\begin{definition}
 Consider a fibrant replacement functor $R : \mathbf{SSet} \to \mathbf{SSet}$ for the Quillen model structure. The assignment $$(X,Y) \to R (Hom_{\mathfrak{C}(\mathcal{C})}(X,Y))$$
 defines a simplicial functor from $\mathfrak{C}(\mathcal{C}^{op}) \times \mathfrak{C}(\mathcal{C})$ to the simplicial category $\mathbf{Kan}$ of Kan complexes. Composing on the left with the natural map $$\mathfrak{C}(\mathcal{C}^{op} \times \mathcal{C}) \to \mathfrak{C}(\mathcal{C}^{op}) \times \mathfrak{C}(\mathcal{C})$$ and transposing along the adjunction $\mathfrak{C} \dashv \mathbf{N}_\Delta$, we get a map of simplicial sets: $$\mathcal{C}^{op} \times \mathcal{C} \to \mathcal{S}$$
 
 We define the Yoneda embedding to be the corresponding map $$j : \mathcal{C} \to Fun(\mathcal{C}^{op}, \mathcal{S})$$
\end{definition}

\begin{proposition}
The Yoneda embedding is fully faithful and preserves (small) limits.
\end{proposition}

\begin{proof}[Proof of Proposition \ref{rigidification}]

We will put together the results, due to Lurie, that were mentioned before, to construct canonical zig-zags of equivalence between the quasicategories we consider.

Firstly, the morphism $$\mathbf{N}_\Delta(\mathbf{SSet}^{\mathfrak{C}(\mathcal{C}^{op})})^\circ)  \rightarrow \mathbf{N}_\Delta(\mathbf{RFib}(\mathcal{C}))$$ is obtained by applying the nerve to a functor of simplicial categories $$g: (\mathbf{SSet}^{\mathfrak{C}(\mathcal{C}^{op})})^\circ \rightarrow \mathbf{RFib}(\mathcal{C})$$ induced by the unstraightening functor (see \cite{lurie2009}, or \cite{hhr2021}, for details on the latter functor).

The right fibration $\mathbf{Un}(F')$ associated by this unstraightening functor with (a fibrant-cofibrant replacement $F'$ of) a representable functor $$F = Hom_{\mathfrak{C}(\mathcal{C})}(-,x)$$ is fiberwise equivalent to the canonical right fibration $\mathcal{C}_{/x} \to \mathcal{C}$. This is implied by the fact that there is a canonical weak homotopy equivalence

$$\mathbf{St}(\mathcal{C}_{/x})(y) \to Hom_{\mathfrak{C}(\mathcal{C})}(y,x)$$
\noindent
as argued in Proposition 2.2.4.1 of \cite{lurie2009} ($\mathbf{St}$ being the straightening functor).
It follows that $\mathcal{C}_{/x} \to \mathcal{C}$ is fiberwise equivalent (hence contravariantly equivalent) to the fibrant-cofibrant replacement of $F$.

Therefore, the essential image through $g$ of the subcategory of fibrant-cofibrant replacements of the representable functors is a simplicial category equivalent to $\overline{\mathcal{C}}_{\mathbf{fb}}$.

Secondly, the morphism $f: \mathbf{N}_\Delta((\mathbf{SSet}^{\mathfrak{C}(\mathcal{C}^{op})})^\circ) \rightarrow Fun(\mathcal{C}^{op}, \mathcal{S})$ is obtained, by the Yoneda lemma, from the map (natural in $\mathcal{D}$)
$$Hom_{\mathbf{Cat_\Delta}}(\mathcal{D}, (\mathbf{SSet}^{\mathfrak{C}(\mathcal{C}^{op})})^\circ) \to Hom_{\mathbf{Cat_\Delta}}(\mathcal{D} \times \mathfrak{C}(\mathcal{C}^{op}),(\mathbf{SSet})^\circ)$$

\noindent resulting from the evaluation map $(\mathbf{SSet}^{\mathfrak{C}(\mathcal{C}^{op})})^\circ \times \mathfrak{C}(\mathcal{C}^{op}) \to (\mathbf{SSet})^\circ$.

When restricted to the (fibrant-cofibrant replacements of) representable simplicial functors, the image of $f$ is constituted of functors $\mathcal{C}^{op} \to \mathcal{S}$ that are essentially representable, in the sense that they are in the essential image of the Yoneda embedding (by the very definition of the latter).

Therefore, when restricting to the simplicial functors which are representable up to equivalence (this is indicated by the $-_{|repr}$ subscript below), the zig-zags

\adjustbox{scale=0.85}{%
\begin{minipage}{\linewidth}
\begin{align*} \label{zigzag}
& \mathcal{C} \rightarrow Fun(\mathcal{C}^{op}, \mathcal{S})_{|repr} \leftarrow \mathbf{N}_\Delta((\mathbf{SSet}^{\mathfrak{C}(\mathcal{C}^{op})})^\circ)_{|repr} \leftarrow \mathbf{N}_\Delta(\overline{\mathcal{C}})\\
& \mathcal{C} \rightarrow Fun(\mathcal{C}^{op}, \mathcal{S})_{|repr} \leftarrow \mathbf{N}_\Delta((\mathbf{SSet}^{\mathfrak{C}(\mathcal{C}^{op})})^\circ)_{|repr} \rightarrow \mathbf{N}_\Delta(\mathbf{RFib}(\mathcal{C}))_{|repr} \leftarrow  \mathbf{N}_\Delta(\overline{\mathcal{C}}_{\mathbf{fb}})
\end{align*}
\end{minipage}}

\vspace{2mm}

\noindent constitute zig-zags of equivalences. This concludes the proof.\end{proof}

\section{Adding structure}
\label{s2}

In this section, we additionally suppose that $\mathcal{C}$ is locally cartesian closed.
We aim to rigidify the the structure of $\mathcal{C}$, that is, to carry over structure, like (chosen) pullbacks that are coherent up to equivalences in $\mathcal{C}$, to "rigid" structure, consisting of (chosen) pullbacks in $\overline{\mathcal{C}}$ that are coherent up to isomorphisms. It is to be understood that such a level of coherence allows one to connect back with the strict features of type theory, relying on some splitting procedure, such as mentioned in the introduction, so that coherence up to isomorphisms is turned into strict coherence (i.e, up to equalities/on the nose).

From now on, the only model structure we consider on $\mathbf{SSet}^{\mathfrak{C}(\mathcal{C}^{op})}$ is the global injective one.

\begin{definition}
 
 \label{cstar}
 We define $\overline{\mathcal{C}}^*$ as the smallest full subcategory of $\mathbf{SSet}^{\mathfrak{C}(\mathcal{C}^{op})}$ containing $\overline{\mathcal{C}}$ and stable under the following operations:
 \begin{itemize}
  \item Taking pullbacks along fibrations
  \item Forming the objects of the form $\Pi_f(x)$ where $f$ and $x$ are fibrations and $\Pi_f$ the right adjoint to the pullback functor along $f$
  \item Forming the (cofibration,trivial fibration) and (trivial cofibration, fibration) factorizations of a morphism.
 \end{itemize}
\end{definition}

We claim the following:

\begin{proposition}

 \label{strict_struct} 
 $\mathcal{C}$ is equivalent to the simplicial nerve of $\overline{\mathcal{C}}^*$.%
 
\end{proposition}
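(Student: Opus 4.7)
The plan is to show that the inclusion $\overline{\mathcal{C}} \hookrightarrow \overline{\mathcal{C}}^*$ of full simplicial subcategories induces an equivalence on simplicial nerves; combined with the previously established equivalence $\mathcal{C} \simeq \mathbf{N}_\Delta(\overline{\mathcal{C}})$, this will give the result. Fully faithfulness on nerves is automatic, since $\overline{\mathcal{C}}$ is a full simplicial subcategory of $\overline{\mathcal{C}}^*$. The real task is to prove essential surjectivity, namely that every object of $\overline{\mathcal{C}}^*$ is weakly equivalent in the injective model structure to some $\overline{y}(x)$ with $x \in \mathcal{C}$.

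I would proceed by structural induction on the generation of $\overline{\mathcal{C}}^*$ from $\overline{\mathcal{C}}$. The case of factorizations is essentially immediate: in either factorization, the intermediate object is weakly equivalent, via the trivial cofibration (resp.\ trivial fibration) leg, to one of the endpoints, so it remains in the essential image of $\overline{y}$ as soon as the endpoints do.

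The substantive cases are pullbacks along fibrations and $\Pi_f$ along fibrations. Here I would invoke two pieces of machinery. First, the simplicial nerve of the category of injectively fibrant-cofibrant objects of $\mathbf{SSet}^{\mathfrak{C}(\mathcal{C}^{op})}$ is a model for the quasicategory $\mathrm{Fun}(\mathcal{C}^{op}, \mathcal{S})$, and under this identification the image of $\overline{\mathcal{C}}$ coincides (via the zig-zag of the previous section) with the essential image of the Yoneda embedding $j : \mathcal{C} \to \mathrm{Fun}(\mathcal{C}^{op}, \mathcal{S})$. Second, using the excellence of the injective model structure, a strict pullback along an injective fibration between fibrant-cofibrant objects computes the $(\infty,1)$-pullback, and a strict $\Pi_f$ along an injective fibration between fibrant-cofibrant objects computes the $(\infty,1)$-dependent product. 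Granting these two facts, the induction reduces to showing the essential image of $j$ is closed under these $(\infty,1)$-operations. Closure under pullbacks is clear since $\mathcal{C}$ admits pullbacks (being locally cartesian closed) and $j$ preserves all limits. For dependent products, the argument is adjunction-theoretic: $\mathcal{C}$ has $\Pi_f$ for every morphism $f$; since $j$ is fully faithful and preserves pullbacks, transporting the adjunction $f^* \dashv \Pi_f$ through $j$ and testing against representables (which generate $\mathrm{Fun}(\mathcal{C}^{op}, \mathcal{S})$ under colimits) yields a canonical equivalence $j(\Pi_f x) \simeq \Pi_{j(f)}(j(x))$, and closure under these $\Pi$'s follows.

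The main obstacle is the second piece of machinery: identifying the strict model-categorical $\Pi_f$ along an injective fibration with the $(\infty,1)$-dependent product in $\mathrm{Fun}(\mathcal{C}^{op}, \mathcal{S})$. The analogous identification for pullbacks reduces to right properness of the injective model structure, which is routine; the dependent-product version is more delicate and relies genuinely on the injective model structure being excellent in the sense of \cite{ls2020}.
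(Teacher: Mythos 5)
Your proposal follows essentially the same route as the paper: reduce to essential surjectivity of the full inclusion $\overline{\mathcal{C}} \hookrightarrow \overline{\mathcal{C}}^*$, induct over the stages generating $\overline{\mathcal{C}}^*$, dispose of the factorization case via the trivial (co)fibration leg, and handle pullbacks and $\Pi_f$ by identifying the strict constructions with the homotopy-invariant ones (using that $\Pi_f$ is right Quillen in the injective structure) and then using that the Yoneda embedding preserves limits and dependent products, so its essential image is closed under them. Your sketch of why $j$ preserves dependent products is slightly more explicit than the paper's, which simply asserts this fact, but the argument is the same in substance.
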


The idea of the proof is that the strict structure added by Definition \ref{cstar} is already present up to equivalence, in the sense that, for instance, the pullback of a cospan with objects of the forms $Hom(-,\overline{y}(x_i))$ should be equivalent to $Hom(-,\overline{y}(p))$, where $p$ is the pullback of a corresponding diagram in $\mathcal{C}$, since homotopy pullbacks in $\mathbf{SSet}^{\mathfrak{C}(\mathcal{C}^{op})}$ yield pullbacks when applying the simplicial nerve.
We record the latter observation in \cref{hol_lemma} below.

Given a simplicial model category $\mathcal{M}$, we note $\mathcal{M}^\circ$ for the full simplicial subcategory spanned by the fibrant-cofibrant objects. Write $\mathbf{Csp}$ for the "cospan-shaped" category. We consider diagrams $X : \mathbf{Csp} \to \mathcal{M}^\circ$ and $X' : \mathbf{Csp} \to \mathbf{N_\Delta}(\mathcal{M}^\circ)$ that are equivalent, in that there are isomorphic as objects of the homotopy category $\mathbf{Ho}({\mathcal{M}^\circ}^\mathbf{Csp}) \simeq \mathbf{Ho}(\mathbf{N_\Delta}({\mathcal{M}^\circ}^\mathbf{Csp}))$.
 
 Note that the latter "identity-on-objects" isomorphism stems from the fact that $\mathcal{M}$ is simplicial, so that the hom-spaces computed from the simplicial enrichment in $\mathcal{M}^\circ$ coincide, up to weak equivalences, with the ones computed in the quasicategory $\mathbf{N_\Delta}(\mathcal{M}^\circ)$. In particular, the induced sets of morphisms (considered up to homotopy) between two $\mathbf{Csp}$-indexed diagrams are the same.

\begin{lemma} 
\label{hol_lemma}
 With the previous notations, the homotopy limit of $X$ (computed, for instance, as a limit of a fibrant replacement of $X$ for the Reedy model structure) is isomorphic to the limit of $X'$ (in the sense of quasicategories), as objects of the homotopy category $\mathbf{Ho}(\mathcal{M}^\circ) \simeq \mathbf{Ho}(\mathbf{N_\Delta}(\mathcal{M}^\circ))$.
\end{lemma}

\begin{proof}
 The diagonal functor 
 
 $$\Delta : \mathcal{M} \to \mathcal{M}^\mathbf{Csp}$$ is well-known to be left Quillen when $\mathcal{M}^\mathbf{Sp}$ is endowed with the usual Reedy model structure (the one such that the fibrant object are the cospan where all objects are fibrant, and at least one of the arrows is a fibration). 
 Hence, the adjunction defining the limit of a cospan diagram yields a derived adjunction, where the left adjoint
 
 $$\mathbf{Ho}(\mathcal{M}^\circ) \to \mathbf{Ho}({\mathcal{M}^\circ}^\mathbf{Csp})$$
 is also a derived functor of the diagonal ($\infty$-)functor 
 
 $$\mathbf{N_\Delta}(\Delta) : \mathbf{N_\Delta}(\mathcal{M}^\circ) \to \mathbf{N_\Delta}({\mathcal{M}^\circ}^\mathbf{Csp})$$
 (this follows from the explicit construction of the latter).
 
 Therefore, their right adjoints also coincide, yielding isomorphic objects when applied to isomorphic diagrams.
 \end{proof}
 
 A similar statement holds for dependent products:
 
 \begin{lemma}
 \label{dp_lemma}
  Let $\mathcal{M}$ be a locally cartesian closed model category. Consider fibrations between fibrant objects $f : b \to c$ and $g : a \to b$ in $\mathcal{M}$. Write $\tilde{f} : b \to c$ and $\tilde{g} : a \to b$ for the corresponding $1$-simplices of the locally cartesian closed quasicategory $\tilde{\mathcal{M}}$ presented by $\mathcal{M}$. Then the dependent product $\Pi_f g$ is isomorphic to the dependent product $\Pi_{\tilde{f}} \tilde{g}$ (computed in the quasicategory) as objects of $\mathbf{Ho}(\mathcal{M}_{/c}) \simeq \mathbf{Ho}(\tilde{\mathcal{M}}_{/c})$.
 \end{lemma}
 
 \begin{proof}
  The isomorphism $\mathbf{Ho}(\mathcal{M}_{/c}) \simeq \mathbf{Ho}(\tilde{\mathcal{M}}_{/c})$, telling that the slice model category above a fibrant object models the corresponding slice of the underlying quasicategory, follows from Corollary 7.6.13 in \cite{cisinski2019}.
  
  The functors that postcompose with $f$ (or $\tilde{f}$) fit in commutative square:

\[\begin{tikzcd}[ampersand replacement=\&]
	{\mathbf{Ho}(\mathcal{M}_{/b})} \&\& {\mathbf{Ho}(\mathcal{M}_{/c})} \\
	\\
	{\mathbf{Ho}(\tilde{\mathcal{M}}_{/b})} \&\& {\mathbf{Ho}(\tilde{\mathcal{M}}_{/c})}
	\arrow["{\mathbf{Ho}(f_!)}", from=1-1, to=1-3]
	\arrow["\simeq"{description}, from=1-1, to=3-1]
	\arrow["\simeq"{description}, from=1-3, to=3-3]
	\arrow["{\mathbf{Ho}(\tilde{f}_!)}"', from=3-1, to=3-3]
\end{tikzcd}\]

From that, we deduce that a similar square for the right adjoint $f^*$ (and $\tilde{f}^*$) commutes up to isomorphism, and, ultimately, likewise for the further right adjoint $f_*$ (and $\tilde{f}_*$):

\[\begin{tikzcd}[ampersand replacement=\&]
	{\mathbf{Ho}(\mathcal{M}_{/b})} \&\& {\mathbf{Ho}(\mathcal{M}_{/c})} \\
	\\
	{\mathbf{Ho}(\tilde{\mathcal{M}}_{/b})} \&\& {\mathbf{Ho}(\tilde{\mathcal{M}}_{/c})}
	\arrow["{\mathbf{Ho}(f_*)}", from=1-1, to=1-3]
	\arrow["\simeq"{description}, from=1-1, to=3-1]
	\arrow["\simeq"{description}, draw=none, from=1-1, to=3-3]
	\arrow["\simeq"{description}, from=1-3, to=3-3]
	\arrow["{\mathbf{Ho}(\tilde{f}_*)}"', from=3-1, to=3-3]
\end{tikzcd}\]

In particular, starting with $g \in \mathcal{M}_{/b}$ (and  $\tilde{g} \in \tilde{\mathcal{M}}_{/b}$), the results of applying the functor $f_*$ (resp. $\tilde{f}_*$) are the same up to isomorphism.  
 \end{proof}

\begin{proof}[Proof of Proposition \ref{strict_struct}]

 Observe that $\overline{\mathcal{C}}^*$ is the colimit of the tower $$\overline{\mathcal{C}}^0 \to ... \to \overline{\mathcal{C}}^n \to ...$$ where $\overline{\mathcal{C}}^0 = \overline{\mathcal{C}}$ and $\overline{\mathcal{C}}^{i+1}$ is the full subcategory of $\mathbf{SSet}^{\mathfrak{C}(\mathcal{C}^{op})}$ obtained by adjoining to $\overline{\mathcal{C}}^i$ the objects of the form $f^*(a)$, $\Pi_f g$ for $f$ and $g$ fibrations, and $a$ an object in $\overline{\mathcal{C}}^i$, as well as adjoining the object appearing in the (cofibration,trivial fibration) and (trivial cofibration, fibration) factorizations of a morphism $f$ in $\overline{\mathcal{C}}^i$. 
 This is because, since the three operations we consider only involve finitely many objects and arrows, we only need $\omega$ steps for the construction to stabilize.  
 $\overline{\mathcal{C}}^*$ is therefore the full subcategory of $\mathbf{SSet}^{\mathfrak{C}(\mathcal{C}^{op})}$  such that $x$ is an object of $\overline{\mathcal{C}}^*$ if and only if it is an object of any of the $\overline{\mathcal{C}}^i$.
 
 It will be enough to prove that the canonical inclusion $\iota: \overline{\mathcal{C}} \to \overline{\mathcal{C}}^*$ is an equivalence of simplicially enriched categories. 
 Since this functor is by definition full and faithful, it will be enough to check that all the objects of $\overline{\mathcal{C}}^*$ are equivalent to an object of $\overline{\mathcal{C}}$.
 
 \noindent
 If $a$ is of the form $\overline{y}(x)$ for $x$ an object of $\overline{\mathcal{C}}$, it is already in the image of $\iota$.
 
 Suppose that, for a natural number $i$, we have proved that every object of $\overline{\mathcal{C}}^i$ is equivalent to an object of $\overline{\mathcal{C}}$ (this is tautological when $i = 0$).
 Then any cospan $u$, with limit $p$ in $\overline{\mathcal{C}}^i$, of the form 
 
\[\begin{tikzcd}
	&& C \\
	\\
	B && A
	\arrow["F"', from=3-1, to=3-3]
	\arrow["G", from=1-3, to=3-3]
\end{tikzcd}\]

\noindent
where $F$ is a fibration, is equivalent to a cospan $u'$

\[\begin{tikzcd}
	&& {\overline{y}(c)} \\
	\\
	{\overline{y}(b)} && {\overline{y}(a)}
	\arrow["f"', from=3-1, to=3-3]
	\arrow["g", from=1-3, to=3-3]
\end{tikzcd}\]

Now, taking $f'$ and $g'$ to be morphisms in $\mathcal{C}$ obtained by pulling/pushing the morphisms $f$ and $g$ (seen as a edges of the simplicial nerve of $\overline{\mathcal{C}}$) along the zig-zag establishing the equivalence between $\mathcal{C}$ and $\mathbf{N}_\Delta(\overline{\mathcal{C}})$, we get a cospan $u''$ in $\mathcal{C}$.

Note $d$ for its limit in $\mathcal{C}$.
Since the Yoneda embedding and the equivalences of $\infty$-categories preserve limits, $\overline{y}(d)$ is a limit of the cospan $u'$ in $\mathbf{N}_\Delta({\mathbf{SSet}^{\mathfrak{C}(\mathcal{C}^{op})}}^\circ)$. The limit $p$ of $u$ computed in $\mathbf{SSet}^{\mathfrak{C}(\mathcal{C}^{op})}$ is a homotopy limit, so it yields a limit in the quasicategory $\mathbf{N}_\Delta({\mathbf{SSet}^{\mathfrak{C}(\mathcal{C}^{op})}}^\circ)$ by Lemma \ref{hol_lemma}. Therefore, $p$ and $\overline{y}(d)$ are seen to be equivalent, as limits of equivalent cospans.

A similar reasoning applies to the dependent product along a fibration $\Pi_f g$, since dependent products are also preserved by the Yoneda embedding, and since the defining adjunction yields a derived adjunction (and even an adjunction of $\infty$-categories), given that the functor $\Pi_f$ is right Quillen. Explicitly, if $f$ and $g$ are equivalent to $\overline{y}(\tilde{f})$ and $\overline{y}(\tilde{g})$ respectively, then $\overline{y}(\Pi_{\tilde{f}} \tilde{g})$ is equivalent to $\Pi_f g$ by Lemma \ref{dp_lemma}.

The (cofibration,trivial fibration) and (trivial cofibration, fibration) factorizations case is evident.

This proves the result for $i+1$ and concludes the proof.\end{proof}

\begin{remark}
\label{size_issue}
Note that $\overline{\mathcal{C}}^*$ is a small subcategory of $\mathbf{SSet}^{\mathfrak{C}(\mathcal{C}^{op})}$ by construction, so that there exists a cardinal $\kappa$ such that all objects of $\overline{\mathcal{C}}^*$ are $\kappa$-presentable.

 \end{remark}

 \begin{remark}
  In this section and the following ones, we will not be explicitly concerned with the coherence problems arising from the fact that pullbacks (that is, substitutions) only preserve the structure in a weak sense. Therefore, it will be implicit that we apply a splitting operation to the comprehension category modeling the type theory we are working with (precisely, the left-splitting introduced in \cite{lw2015local}).
 \end{remark}
 
 \begin{theorem}
 
 \label{t1}
 
 $\overline{\mathcal{C}}^*$ yields an interpretation of MLTT. %
 \end{theorem}
 
 \begin{proof}
  The simplicial category $\overline{\mathcal{C}}^*$ actually forms a $\pi$-tribe, as defined in \cite{joyal2017notes}, which can be used as the basis of a (non-split) comprehension category $(\overline{\mathcal{C}}^*)^\rightarrow_\text{fib} \to \overline{\mathcal{C}}^*$, where $(\overline{\mathcal{C}}^*)^\rightarrow_\text{fib}$ is the full subcategory of $(\overline{\mathcal{C}}^*)^\rightarrow$ spanned by the fibrations.
  
  To see this, it is convenient to picture $\overline{\mathcal{C}}^*$ as a sub-tribe of the simplicial $\pi$-tribe $R\mathcal{C}$ of essentially representable presheaves in $\mathbf{SSet}^{\mathfrak{C}(\mathcal{C}^{op})}$. The former is indeed closed under pullbacks along fibrations, and under taking the factorization of a morphism as an anodyne (actually a trivial cofibration) followed by a fibration. We further observe that $R\mathcal{C}$ is a simplicial $\pi$-tribe, hence it is equivalent to its simplicial localization as a tribe, and the inclusion $\overline{\mathcal{C}}^* \to R\mathcal{C}$ is a weak equivalence of tribes, as one can be seen by checking that it induces an equivalence of categories between the homotopy categories $\mathbf{Ho}(\overline{\mathcal{C}}^*) \to \mathbf{Ho}(R\mathcal{C})$. Therefore, the $(\infty,1)$-category stemming from the tribe structure on $\overline{\mathcal{C}}^*$ is indeed the same as the one stemming from the simplicially enriched category structure.
 \end{proof}

\section{Universes and univalence}
\label{s3}

In this section, we further strengthen our hypothesis by supposing that $\mathcal{C}$ admits a univalent map $\mathbf{\pi} : \tilde{\mathbf{u}} \to \mathbf{u}$ classifying a pullback-stable class of morphisms $\mathbf{S}$.

The goal is to construct an object that provides a universe for $\mathbf{S}$ in an adequate sense. This is more involved than adding, for instance, pullbacks along fibrations because the construction of universes in a model category will be realized relying on some sophisticated technology that has been introduced by Shulman in \cite{shulman2019}.

We start by recalling what it means for a universe to be univalent, first in a quasicategory, then in a model category.
The first definition stems from Theorem 6.28 in \cite{rasekh2018cso} (in a situation where it is formulated in terms of complete Segal spaces rather than quasicategories):

\begin{definition}
 Let $p : e \to b$ be a map in $\mathcal{C}$.
Note $\mathcal{O}_\mathcal{C} := Fun(\Delta_1,\mathcal{C})$ and $\mathcal{O}_\mathcal{C}^{(p)}$ for the subcategory whose objects are the maps that are pullbacks of $p$, and whose morphisms are pullback squares.

There is a canonical map of right fibrations $\alpha : \mathcal{C}_{/b} \to \mathcal{O}_\mathcal{C}^{(p)}$ induced by the Yoneda lemma (this map essentially takes a morphism $x \to b$ to the morphism $p^*e \to x$ obtained by pullback from $p$).
The map $p : e \to b$ is \textit{univalent} when $\alpha$ is an equivalence.

Note that, equivalently, the map $p$ is univalent when $p$ is a final object of $\mathcal{O}_\mathcal{C}^{(p)}$.
\end{definition}

\begin{remark}
 
In particular, $p$ being univalent implies that if a morphism $q$ is displayed in two ways as a pullback of $p$ as in the diagrams below,

\[\begin{tikzcd}
	y && e && y && e \\
	\\
	x && b && x && b
	\arrow["q"', from=1-1, to=3-1]
	\arrow["\phi"', from=3-1, to=3-3]
	\arrow["p", from=1-3, to=3-3]
	\arrow[from=1-1, to=1-3]
	\arrow["q"', from=1-5, to=3-5]
	\arrow[from=1-5, to=1-7]
	\arrow["p", from=1-7, to=3-7]
	\arrow["\psi"', from=3-5, to=3-7]
	\arrow["\lrcorner"{anchor=center, pos=0.125}, draw=none, from=1-5, to=3-7]
	\arrow["\lrcorner"{anchor=center, pos=0.125}, draw=none, from=1-1, to=3-3]
\end{tikzcd}\]

\noindent
then $\phi$ and $\psi$ are homotopic.

\end{remark}

The second definition (or rather characterization), due to Voevodsky (see \cite{slides_gambino} and \cite{slides_joyal}), is the following:

\begin{definition}
\label{univfib}
 In a model category $\mathcal{M}$, a fibration $P : E \to B$ is univalent when, for every fibration $Q : Y \to X$, the pair of maps $(\phi,\phi')$ in the following homotopy pullback diagram is unique up to homotopy, when such a pair exists (that is, the space of such pair of maps is contractible).

\[\begin{tikzcd}
	Y \\
	& Z && E \\
	\\
	& X && B
	\arrow["Q"', from=1-1, to=4-2]
	\arrow["\phi"', from=4-2, to=4-4]
	\arrow["P", from=2-4, to=4-4]
	\arrow["{\phi'}", from=1-1, to=2-4]
	\arrow[from=2-2, to=2-4]
	\arrow[from=2-2, to=4-2]
	\arrow["\lrcorner"{anchor=center, pos=0.125}, draw=none, from=2-2, to=4-4]
	\arrow["\sim"{description}, dashed, from=1-1, to=2-2]
\end{tikzcd}\]

\noindent
Here the dotted map is a weak equivalence (by definition of homotopy pullbacks).
\end{definition}

Our goal is now the following:

\begin{proposition}
 There exists a univalent universe $\mathscr{U}$ in $\mathbf{SSet}^{\mathfrak{C}(\mathcal{C}^{op})}$ classifying the fibrations equivalent to a morphism corresponding to an edge in $\mathbf{S}$.
 
 Moreover, $\mathscr{U}$ is essentially representable, in the sense that it is equivalent to an object of $\overline{\mathcal{C}}$.
\end{proposition}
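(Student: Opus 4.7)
The plan is to lift the quasicategorical universe $\pi : \tilde{\mathbf{u}} \to \mathbf{u}$ to a strictly univalent fibration in $\mathbf{SSet}^{\mathfrak{C}(\mathcal{C}^{op})}$, relying crucially on Shulman's rectification machinery from \cite{shulman2019}. First, I would use the equivalence of Proposition~\ref{rigidification} to transport the edge $\pi$ in $\mathcal{C}$ to an edge in $\mathbf{N}_\Delta(\overline{\mathcal{C}})$ from $\overline{y}(\tilde{\mathbf{u}})$ to $\overline{y}(\mathbf{u})$; this corresponds, up to homotopy, to a morphism $\overline{y}(\tilde{\mathbf{u}}) \to \overline{y}(\mathbf{u})$ in the simplicial category $\overline{\mathcal{C}}$. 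Factoring this morphism as a trivial cofibration followed by a fibration in the injective model structure yields a fibration $p : \tilde{\mathscr{U}}_0 \to \mathscr{U}_0$ with $\mathscr{U}_0$ weakly equivalent to $\overline{y}(\mathbf{u})$, an already essentially representable candidate.

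The next step is to verify that $p$ weakly classifies the right class. A fibration $f : Y \to X$ between objects of $\overline{\mathcal{C}}^*$ corresponds, via the equivalence $\mathcal{C} \simeq \mathbf{N}_\Delta(\overline{\mathcal{C}}^*)$ of Proposition~\ref{strict_struct}, to an edge of $\mathcal{C}$. Using that homotopy pullbacks in $\mathbf{SSet}^{\mathfrak{C}(\mathcal{C}^{op})}$ descend to pullbacks in the simplicial nerve, and that the Yoneda embedding $j$ preserves pullbacks, I would show that $f$ arises (up to homotopy) as a pullback of $p$ precisely when the corresponding edge of $\mathcal{C}$ is classified by $\pi$, i.e., lies in $\mathbf{S}$. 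The homotopical uniqueness of classifying maps in $\overline{\mathcal{C}}^*$ then transports from the quasicategorical univalence of $\pi$ along the same zig-zag.

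The main obstacle is upgrading this weak form of the universal property to the strict univalence of Definition~\ref{univfib}, which demands a literal uniqueness up to homotopy of pullback \emph{squares}, not merely of the maps into the base. To address this, I would invoke the construction of \cite{shulman2019}, which produces a strictly univalent fibration in a well-behaved model category out of a homotopical object classifier. The injective model structure on $\mathbf{SSet}^{\mathfrak{C}(\mathcal{C}^{op})}$ is excellent and, as explained in Remark~\ref{size_issue}, can be equipped with algebraic weak factorization systems, so the hypotheses for Shulman's machinery are met; one further needs to match the class of fibrations classified by the resulting strict universe with the essential image of $\mathbf{S}$, which follows from the previous paragraph together with pullback-stability of $\mathbf{S}$. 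Applying the rectification to $p$ then yields the desired $\tilde{\mathscr{U}} \to \mathscr{U}$, and essential representability is preserved along the way since $\mathscr{U}$ remains weakly equivalent to $\overline{y}(\mathbf{u})$.
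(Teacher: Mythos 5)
Your overall plan---start from a fibration $\pi:\tilde{U}\to U$ corresponding to $\tilde{\mathbf{u}}\to\mathbf{u}$ and then appeal to \cite{shulman2019}---matches the paper's strategy in outline, but there is a genuine gap at the step you describe as ``applying the rectification to $p$''. Theorem 5.22 of \cite{shulman2019} is not a device that takes a single fibration weakly classifying a class of maps and strictifies it; its input is a \emph{notion of fibred structure} $\phi:\mathds{F}\to\mathds{E}$ over the core self-indexing, and its hypotheses are that this notion be locally representable, relatively acyclic and homotopy invariant. The excellence of the injective model structure and the algebraic weak factorization systems of Remark \ref{size_issue} only address the ambient model-categorical hypotheses; they say nothing about the fibred structure you would feed into the theorem, which you never construct. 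The paper's actual work is exactly here: it defines an object of equivalences $\mathrm{Equiv}_B(E_1,E_2)$, uses it to introduce ``virtual equivalences'' and the pseudofunctor $\mathbf{Rh}_\pi$ of homotopy classification diagrams (fibrations equipped with a virtual equivalence to a pullback of $\pi$), and then proves the three conditions for $\mathbf{Rh}_\pi\times_{\mathds{E}}\mathbf{Fib}$. In particular, the quasicategorical univalence of $\tilde{\mathbf{u}}\to\mathbf{u}$ enters precisely in the proof of relative acyclicity (via a homotopy between the two classifying maps and the homotopy extension property along the cofibration), not as a property that ``transports along the zig-zag'' to give Definition \ref{univfib} for your candidate $p$ directly; without the relative acyclicity argument you have no univalent universe at all, strict or otherwise.

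The second claim of the statement is also not established by your argument. You assert that essential representability ``is preserved along the way since $\mathscr{U}$ remains weakly equivalent to $\overline{y}(\mathbf{u})$'', but Shulman's construction does not produce the universe as a replacement of the object you started from: $\mathscr{U}$ is built afresh from the locally representable fibred structure, and there is no a priori comparison with $U$. The paper proves $\mathscr{U}\simeq U$ in a separate proposition, by exhibiting a classifying map $\phi:U\to\mathscr{U}$ (since $\pi$ is an $\mathbf{Rh}_\pi$-algebra) and a map $\psi:\mathscr{U}\to U$ (from the homotopy classification diagram for $\pi'$), and then using univalence of \emph{both} $\pi'$ and $\tilde{\mathbf{u}}\to\mathbf{u}$ to show $\phi\circ\psi$ and $\psi\circ\phi$ are homotopic to the identities. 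Some argument of this kind is needed; as written, your proposal leaves both the construction of the input to Theorem 5.22 and the representability of the output unproved.
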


Here, the notion of equivalence between morphisms in $\mathbf{SSet}^{\mathfrak{C}(\mathcal{C}^{op})}$ refers to isomorphisms in the homotopy category $Ho((\mathbf{SSet}^{\mathfrak{C}(\mathcal{C}^{op})})^\rightarrow)$ (alternatively, this can be seen as the existence of a zig-zag of squares between two morphisms pictured vertically, where all the horizontal arrows are equivalence). 

We will deduce this result directly from Theorem 5.22 in \cite{shulman2019}, by constructing an appropriate notion of fibred structure (as introduced in section 3 of that paper, which we recall below) in:
$$\mathcal{E} := \mathbf{SSet}^{\mathfrak{C}(\mathcal{C}^{op})}$$
Our starting point for a universal fibration is the following naive candidate:

\begin{definition}
 Define $\pi: \tilde{U} \to U$ in $\mathcal{E}$ to be a fibration between fibrant objects corresponding to the universal map $\tilde{\mathbf{u}} \to \mathbf{u}$ in $\mathcal{C}$. This means that $\pi$ is equivalent to the morphism $Hom(-,\tilde{\mathbf{u}}) \to Hom(-,\mathbf{u})$ in the previous sense. %
\end{definition}

In a locally cartesian closed category, it is possible to construct an object \textit{representing} the isomorphism between two objects (in the sense that, for instance, the object $\mathbf{Iso}(A,B)$ has global elements which are naturally in bijection with the set of isomorphisms between $A$ and $B$).

In a suitable model category, a similar construction, though more sophisticated, can be carried out to represent, not just isomorphisms, but (structured) weak equivalences in general. The construction of such an object is detailed in Section 4 of \cite{shulman2013}. We record the corresponding result in the lemma below.

\begin{definition}
 
 A \textit{coherent simplicial homotopy equivalence} in a simplicial model category is the data of a morphism $f : a \to b$, a left inverse (up to homotopy) $g$, a right inverse $h$, as well as the homotopies $g \circ f \sim id_a$ and $f \circ h \sim id_b$.  
 
\end{definition}

\begin{lemma}
 Consider two fibrations $E_1 \to B$ and $E_2 \to B$ in $\mathcal{E}$.
 There is an object $\text{Equiv}_B(E_1,E_2)$ over $B$ such that for any object Y and maps $f: Y \to B$ , morphisms from  $f$ to $\text{Equiv}_B(E_1,E_2)$ over $B$ are naturally in bijection with coherent simplicial homotopy equivalence $f^*E_1 \to f^*E_2$ over $Y$.
\end{lemma}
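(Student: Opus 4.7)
The plan is to adapt the construction from Section~4 of \cite{shulman2013} to the injective model structure on $\mathcal{E} = \mathbf{SSet}^{\mathfrak{C}(\mathcal{C}^{op})}$. This is a simplicial, right-proper, locally Cartesian closed model category, which supplies everything the construction needs: fibrewise dependent products $\Pi_f$ along fibrations (as already used in the previous section to define $\overline{\mathcal{C}}^*$), functorial path objects for fibrant objects, and the fact that pullback along a fibration preserves weak equivalences between fibrations over a fibrant base.

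First, I would build the fibrewise internal hom. Writing $p_1 : E_1 \to B$ and $p_2 : E_2 \to B$, set
$$\text{Hom}_B(E_1, E_2) := \Pi_{p_1}(p_1^* E_2) \longrightarrow B.$$
Since $p_1$ is a fibration, both $p_1^*$ and its right adjoint $\Pi_{p_1}$ are right Quillen, so $\text{Hom}_B(E_1, E_2) \to B$ is a fibration between fibrant objects. Its defining adjunction yields, for any $f : Y \to B$ with $Y$ fibrant, a natural bijection between maps $f \to \text{Hom}_B(E_1, E_2)$ over $B$ and morphisms $f^* E_1 \to f^* E_2$ over $Y$.

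Second, I would cut out the subobject of equivalences. Over $\text{Hom}_B(E_1, E_2)$ sits a universal map $\mathrm{ev} : \pi^* E_1 \to \pi^* E_2$ between two fibrations (with $\pi$ the projection to $B$). Using a functorial path object on $\pi^* E_2$, form the fibrewise homotopy fibres of $\mathrm{ev}$, and then, by a further dependent product, encode the ``contractibility of all these fibres'' as an object $\text{isEquiv}(\mathrm{ev})$ over $\text{Hom}_B(E_1, E_2)$. Finally, set
$$\text{Equiv}_B(E_1, E_2) := \sum_{\text{Hom}_B(E_1, E_2)} \text{isEquiv}(\mathrm{ev}) \longrightarrow B.$$
Pulling this back along $f : Y \to B$ and using that weak equivalences between fibrations over a fibrant base are exactly the maps with contractible homotopy fibres, one sees that sections of the pullback correspond to weak equivalences $f^* E_1 \to f^* E_2$ over $Y$.

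The main obstacle is the second step: making the ``proposition'' $\text{isEquiv}(\mathrm{ev})$ proof-irrelevant enough that a lift against it is \emph{strictly} unique, so that the resulting correspondence is a bijection of hom-sets rather than merely a homotopy equivalence of spaces. Standard remedies are to use the half-adjoint-equivalence formulation, whose witness object is contractible when inhabited and hence, after a trivial-fibration factorisation, yields strict uniqueness; or to use the fact that ``is a weak equivalence'' is a $(-1)$-truncated predicate in the internal language of $\mathcal{E}$. This bookkeeping is precisely what \cite{shulman2013} carries out, and once it is in place naturality in $f$ is automatic from the naturality of pullback and of $\Pi_{p_1}$.
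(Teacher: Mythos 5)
Your proposal follows essentially the same route as the paper: the paper offers no independent argument for this lemma, simply recording the result from Section 4 of \cite{shulman2013}, and the construction you sketch (fibrewise hom via $\Pi_{p_1}(p_1^*E_2)$, then carving out equivalences via a contractibility/half-adjoint predicate, with the strictness bookkeeping deferred to that same reference) is exactly the cited construction. So the approaches agree, and your acknowledgement that the only delicate point is making the correspondence a strict, natural bijection is precisely the part the paper also delegates to Shulman.
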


The situation can be pictured as in the following diagram:

\[\begin{tikzcd}
	& {f^*E_2} && {E_2} \\
	{f^*E_1} && {E_1} \\
	&&&&& {\text{Equiv}_B(E_1,E_2)} \\
	& Y && B
	\arrow[two heads, from=2-3, to=4-4]
	\arrow[two heads, from=1-4, to=4-4]
	\arrow["f"', from=4-2, to=4-4]
	\arrow[two heads, from=2-1, to=4-2]
	\arrow[two heads, from=1-2, to=4-2]
	\arrow[two heads, from=3-6, to=4-4]
	\arrow[dashed, from=4-2, to=3-6]
	\arrow[dashed, from=2-1, to=1-2]
	\arrow[from=2-1, to=2-3]
	\arrow[from=1-2, to=1-4]
	\arrow["\lrcorner"{anchor=center, pos=0.125}, draw=none, from=2-1, to=4-4]
	\arrow["\lrcorner"{anchor=center, pos=0.125}, draw=none, from=1-2, to=4-4]
\end{tikzcd}\]

\begin{definition}
 We define a (large) groupoid-valued pseudofunctor $\mathbf{Rh}_\pi: \mathcal{E}^{op} \to \catname{GPD}$ as follows:
 
 \begin{itemize}
  \item For $X$ an object,  $\mathbf{Rh}_\pi(X)$ has as its underlying (large) set of objects the set of diagrams of this form:

\begin{equation}
 \label{hcd}
\begin{tikzcd}
	& Z && {\tilde{U}} \\
	Y \\
	& X && U
	\arrow["\pi", two heads, from=1-4, to=3-4]
	\arrow[from=3-2, to=3-4]
	\arrow[two heads, from=1-2, to=3-2]
	\arrow[from=1-2, to=1-4]
	\arrow["h", dashed, from=2-1, to=1-2]
	\arrow["\lrcorner"{anchor=center, pos=0.125}, draw=none, from=1-2, to=3-4]
	\arrow[from=2-1, to=3-2]
\end{tikzcd}
\end{equation}

where $h$ is a coherent simplicial homotopy equivalence. The morphisms are the (natural) isomorphisms between two such diagrams, above the map $\pi: \tilde{U} \to U$.

We will refer to such diagrams as \textit{homotopy classification diagrams}.

  \item For $X' \to X$ a morphism, and a homotopy classification diagram for $X$, we construct one for $X'$ as follows:

\[\begin{tikzcd}
	& Z' && Z && {\tilde{U}} \\
	{Y'} && Y \\
	& {X'} && X && U
	\arrow[two heads, from=1-4, to=3-4]
	\arrow["h", dashed, from=2-3, to=1-4]
	\arrow[from=1-2, to=1-4]
	\arrow["\lrcorner"{anchor=center, pos=0.125}, draw=none, from=1-2, to=3-4]
	\arrow[from=1-4, to=1-6]
	\arrow[two heads, from=1-6, to=3-6]
	\arrow[from=3-4, to=3-6]
	\arrow[from=2-3, to=3-4]
	\arrow[from=3-2, to=3-4]
	\arrow["\lrcorner"{anchor=center, pos=0.125}, draw=none, from=1-4, to=3-6]
	\arrow[two heads, from=1-2, to=3-2]
	\arrow[from=2-1, to=2-3]
	\arrow[from=2-1, to=3-2]
	\arrow["\lrcorner"{anchor=center, pos=0.125, rotate=45}, draw=none, from=2-1, to=3-4]
	\arrow["{h'}", dashed, from=2-1, to=1-2]
\end{tikzcd}\]

Clearly $h'$ inherits a coherent simplicial homotopy equivalence structure from such a structure on $h$.

\end{itemize}

This construction is indeed pseudofunctorial (similarly to the situation where one considers ordinary classification diagrams, see Example 3.9 in \cite{shulman2019}).

\end{definition}

\begin{remark}
 If $F : E_0 \to B$ and $G : E_1 \to B$ are two fibrations, a morphism $F \to G$ (in the slice over $B$) is weak equivalence if and only if it has a coherent simplicial homotopy equivalence structure. This is because $F$ and $G$ are fibrant-cofibrant objects for the model structure on the slice.
\end{remark}

We briefly recall the following definitions from \cite{shulman2019}:

\begin{definition}
 Write $\mathcal{\mathbf{PSH}}(\mathcal{E})$ for the 2-category of contravariant pseudofunctors from $\mathcal{E}$ to $\catname{GPD}$, with pseudonatural transformations between them.
 
 A strict discrete fibration in $\mathcal{\mathbf{PSH}}(\mathcal{E})$ is a strictly natural transformation $\mathds{X} \to \mathds{Y}$ such that each component $\mathds{X}(A) \to \mathds{Y}(A)$ is a discrete fibration.

 The core self-indexing of $\mathcal{E}$ is defined to be the functor $\mathds{E}: \mathcal{E}^{op} \to \catname{GPD}$ mapping $Y$ to the core of $\mathcal{E}_{/Y}$.
 
 \end{definition}
 
 \begin{definition}

 A notion of fibred structure (on $\mathcal{E}$) is a strict discrete fibration $\phi: \mathds{F} \to \mathds{E}$ with small fibers (where $\mathds{F}$ is an object of $\mathcal{\mathbf{PSH}}(\mathcal{E})$ and $\mathds{E}$ is the core self-indexing defined previously).
 
 Given such a fibration, an $\mathds{F}$-structure on a morphism $f: X \to Y$ in $\mathcal{E}$ is an element of the fiber of $\phi(Y)$ above $f$. When $f$ is equipped with an ~$\mathds{F}$-structure, it is called an $\mathds{F}$-algebra. An $\mathds{F}$-morphism from $f$ to $f'$ is a pullback square (exhibiting $f'$ as a pullback of $f$) such that $f'$ is equipped with the $\mathds{F}$-algebra structure, induced from that of $f$ thanks to the function between fibers given by $\phi$.
 \end{definition}

Assigning to a homotopy classification diagram (\ref{hcd}) for $X$ the morphism $Y \to X$ allows us to define (up to the strictification process for a fibration discussed in Section 2 of \cite{shulman2019}) a notion of fibred structure $\phi: \mathbf{Rh}_\pi \to \mathds{E}$.%

There are three important properties for a notion of fibred structure to verify for constructing a corresponding universe from Theorem 5.22 in \cite{shulman2019}. We recall the meaning of these properties (Definitions 3.8, 3.9 and 3.10), then give the corresponding proof for $\mathbf{Rh}_\pi \times_{\mathds{E}} \mathbf{Fib}$, where $\mathbf{Fib}$ is a (well-behaved) notion of fibred structure for the class of fibrations. %

\begin{definition}
 A notion of fibred structure $\phi: \mathds{F} \to \mathds{E}$ is called locally representable if the pullback of representable pseudofunctors along $\phi$ is representable. Concretely, this means that, given any map $X \to Z$ in $\mathcal{E}$ (seen by Yoneda as morphism of $Hom(-,X) \to \mathds{E}(Z)$), there is a map $\phi_X^{\mathds{F}}: \mathds{F}_X \to Z$ such that, for any $g: Y \to Z$, there is a natural bijection between $\mathds{F}$-structures on  $g^*X$ and lifts of $g$ to $\mathds{F}_X$.
\end{definition}

\begin{lemma}
 $\phi: \mathbf{Rh}_\pi \times_{\mathds{E}} \mathbf{Fib} \to \mathds{E}$ is a locally representable notion of fibred structure.
\end{lemma}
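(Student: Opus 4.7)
The plan is to exhibit the representing object $\mathds{F}_{X/Z}$ explicitly by combining the universal map $\pi : \tilde U \to U$ with the relative equivalence construction of the preceding lemma. Starting from a fibration $X \to Z$ (representing an element of $\mathbf{Fib}(Z) \subseteq \mathds{E}(Z)$), I would form the product $Z \times U$ with projections $p_Z$ and $p_U$, pull back $X$ along $p_Z$ and $\pi$ along $p_U$ to obtain two fibrations $p_Z^* X$ and $p_U^* \tilde U$ over $Z \times U$, and then define
$$\mathds{F}_{X/Z} \ :=\ \text{Equiv}_{Z \times U}\bigl(p_Z^* X,\ p_U^* \tilde U\bigr),$$
regarded as an object over $Z$ via the composite with $p_Z$. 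Note that the relative equivalence object itself is available without any fibrancy hypothesis on $Z \times U$, by the remark following the definition of virtual equivalence, which is exactly why that notion was introduced.

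Next I would verify the universal property. Given a morphism $g : Y \to Z$, a lift of $g$ to $\mathds{F}_{X/Z}$ decomposes through the two-step fibration $\mathds{F}_{X/Z} \to Z \times U \to Z$ into the data of a lift of $g$ to $Z \times U$ — equivalently, by fixing the projection to $Z$ as $g$, a map $u : Y \to U$ — together with a further lift of $(g,u): Y \to Z \times U$ to $\text{Equiv}_{Z \times U}(p_Z^* X, p_U^* \tilde U)$. By the defining universal property of the relative equivalence object and its compatibility with change of base, the latter is precisely a virtual equivalence over $Y$ between the two pullbacks $(g,u)^* p_Z^* X = g^* X$ and $(g,u)^* p_U^* \tilde U = u^* \tilde U$. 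This is exactly the data of a homotopy classification diagram (\ref{hcd}) for the morphism $g^* X \to Y$: the map $u$ supplies the classifying arrow $Y \to U$ (and hence the pullback $Z' := u^*\tilde U$), while the virtual equivalence supplies the dashed arrow $h$. Since $X \to Z$ is a fibration, $g^* X \to Y$ is automatically a fibration and inherits its $\mathbf{Fib}$-structure from that of $X \to Z$ by pullback, so this $\mathbf{Fib}$-component is free; the correspondence thus yields a bijection between lifts of $g$ to $\mathds{F}_{X/Z}$ and $(\mathbf{Rh}_\pi \times_{\mathds{E}} \mathbf{Fib})$-structures on $g^* X \to Y$, which is exactly what local representability demands.

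Naturality of this bijection in $g$ follows from the pseudofunctoriality of pullback and from the pullback-stability of $\text{Equiv}$ and of virtual equivalences. The main point I expect to need care about is not conceptual but bookkeeping: one must check that the bijection described above intertwines the reindexing action along $X' \to X$ spelled out in the second clause of the definition of $\mathbf{Rh}_\pi$ with the ordinary precomposition action on lifts, so that the resulting map $\mathds{F}_X \to Z$ is strictly natural and not merely pseudonatural — this is where one invokes the strictification procedure for discrete fibrations alluded to when $\phi: \mathbf{Rh}_\pi \to \mathds{E}$ was introduced. Because the $\text{Equiv}$ construction is available without fibrancy of the base and because the $\mathbf{Fib}$-component is inherited by pullback, there is no further obstacle beyond this coherence check.
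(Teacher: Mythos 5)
Your treatment of the $\mathbf{Rh}_\pi$ factor is essentially the paper's: the representing object is exactly $\text{Equiv}_{Z \times U}(X \times U, Z \times \tilde{U})$ (your $\text{Equiv}_{Z\times U}(p_Z^*X, p_U^*\tilde U)$), and the identification of lifts of $g$ with a classifying map $u:Y\to U$ together with a virtual equivalence $g^*X \to u^*\tilde U$ over $Y$ is precisely why virtual equivalences were introduced; this is the paper's adaptation of Shulman's Example 3.15 with isomorphism objects replaced by equivalence objects.

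The gap is in how you dispose of the $\mathbf{Fib}$ factor. Local representability, as defined in the paper, quantifies over \emph{every} map $X \to Z$ in $\mathcal{E}$, not only fibrations, and your construction starts by assuming $X \to Z$ is a fibration so that "the $\mathbf{Fib}$-component is free." For a non-fibration $X \to Z$ your object $\text{Equiv}_{Z\times U}(p_Z^*X, p_U^*\tilde U)$ still classifies $\mathbf{Rh}_\pi$-structures on $g^*X$ (nothing in the definition of a homotopy classification diagram forces $Y \to X$ to be a fibration), so it can admit lifts of $g$ even when $g^*X \to Y$ is not a fibration, and the required bijection with $(\mathbf{Rh}_\pi \times_{\mathds{E}} \mathbf{Fib})$-structures fails; already $g = \mathrm{id}_Z$ gives a counterexample whenever a non-fibration is virtually equivalent over $Z$ to a pullback of $\pi$. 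What is missing is precisely the nontrivial input the paper cites: $\mathbf{Fib}$ itself is a locally representable notion of fibred structure (Shulman's Example 3.17, which depends on the ambient injective model structure and is not automatic), and a pullback of locally representable notions over $\mathds{E}$ is locally representable (Example 3.11), so the correct representing object for the combined notion at an arbitrary $X \to Z$ is the fibre product over $Z$ of your $\text{Equiv}$-object with the representing object $\mathbf{Fib}_X \to Z$. With that reduction added (or, equivalently, by citing those two facts and then only proving local representability of $\mathbf{Rh}_\pi$, as the paper does), your argument for the $\mathbf{Rh}_\pi$ part goes through as written; the coherence/strictification caveat you raise at the end is real but is handled by the strictification process already invoked when $\phi$ was defined.
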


\begin{proof}
 Since $\mathbf{Fib}$ is chosen to be locally representable (Corollary 8.29 in \cite{shulman2019}), it is enough to show that so is $\mathbf{Rh}_\pi$ (by Example 3.11 in \cite{shulman2019}).

 The proof follows directly from Example 3.15 of (\cite{shulman2019}), replacing the object of isomorphisms with an object of weak equivalences.
 
 Actually, the notion of coherent simplicial homotopy equivalence is such that $\text{Equiv}_{Z \times U}(X \times U,Z \times \tilde{U})$ represents $\mathbf{Rh}_{\pi,X}$ (when looking at maps that are already known to be fibrations).
 \end{proof}

\begin{definition}
 A notion of fibred structure $\phi: \mathds{F} \to \mathds{E}$ is relatively acyclic if, for any pullback as in the diagram below,

\[\begin{tikzcd}
	{X'} && X \\
	\\
	{Y'} && Y
	\arrow["{f'}"', from=1-1, to=3-1]
	\arrow["f", from=1-3, to=3-3]
	\arrow["g", from=1-1, to=1-3]
	\arrow["i"', hook, from=3-1, to=3-3]
	\arrow["\lrcorner"{anchor=center, pos=0.125}, draw=none, from=1-1, to=3-3]
\end{tikzcd}\]

\noindent
where $i$ is a cofibration and $f$ and $f'$ are $\mathds{F}$-algebras, there exists a new ~$\mathds{F}$-structure on $f'$ making the diagram an $\mathds{F}$-morphism.
\end{definition}

 \begin{lemma}
 $\phi: \mathbf{Rh}_\pi \times_{\mathds{E}} \mathbf{Fib} \to \mathds{E}$ is a relatively acyclic notion of fibred structure.
 \end{lemma}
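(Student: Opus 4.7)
The plan is to verify relative acyclicity by showing that an $\mathbf{Rh}_\pi \times_{\mathds{E}} \mathbf{Fib}$-structure carried by the pulled-back fibration $f' : X' \to Y'$ can be realized as the restriction along $i : Y' \hookrightarrow Y$ of an $\mathbf{Rh}_\pi \times_{\mathds{E}} \mathbf{Fib}$-structure on $f : X \to Y$ --- this is the essential content of the property. Unpacking, the data on $f'$ amounts to a classifying map $\chi' : Y' \to U$ together with a virtual equivalence $h' : X' \to Z'$ over $Y'$, where $Z' := Y' \times_U \tilde{U}$. I need to construct an analogous pair $(\chi, h)$ over $Y$, with $Z := Y \times_U \tilde{U}$, whose pullback along $i$ recovers $(\chi', h')$, so that the square becomes an $\mathds{F}$-morphism under the newly built structure on $f$.

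The first step extends the classifying map. Since $U$ is fibrant (being the codomain of the fibration $\pi$ between fibrant objects) and $i$ is a trivial cofibration in the excellent injective model structure on $\mathcal{E} = \mathbf{SSet}^{\mathfrak{C}(\mathcal{C}^{op})}$, the standard lifting yields $\chi : Y \to U$ with $\chi \circ i = \chi'$. Pulling $\pi$ back along $\chi$ produces a fibration $Z \to Y$ between fibrant objects which restricts along $i$ to $Z'$ on the nose.

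The second step extends the virtual equivalence. By definition, $h'$ corresponds to a section $\sigma' : Y' \to \text{Equiv}_{Y'}(X', Z')$. Using the change-of-base stability of the $\text{Equiv}$-construction noted immediately after its defining lemma, this section amounts to a lift of $i : Y' \to Y$ to $\text{Equiv}_Y(X, Z)$. Promoting $\sigma'$ to a full section $\sigma : Y \to \text{Equiv}_Y(X, Z)$ is then a lifting problem of $i$ against the projection $\text{Equiv}_Y(X, Z) \to Y$. Such a $\sigma$ yields a virtual equivalence $h : X \to Z$ over $Y$, and the pair $(\chi, h)$ assembles into the desired $\mathbf{Rh}_\pi$-structure on $f$ whose pullback along $i$ reproduces $(\chi', h')$.

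I expect the principal obstacle to be verifying that the projection $\text{Equiv}_Y(X, Z) \to Y$ is itself a fibration, since this is what allows the lifting against the trivial cofibration $i$ to be solved. This should follow from the construction of $\text{Equiv}_B(-,-)$ in Section~4 of \cite{shulman2013}, which produces a fibration over $B$ when its inputs are fibrations between fibrant objects: here $f : X \to Y$ is a fibration because it is an $\mathbf{Fib}$-algebra by hypothesis, $Z \to Y$ is a fibration by construction, and fibrancy of the ambient objects is guaranteed by the excellent injective model structure used throughout Section~2. The remaining verifications --- that the lifted $\sigma$ genuinely corresponds to a virtual equivalence and that its restriction along $i$ agrees with the original $\sigma'$ --- are bookkeeping consequences of the pullback-compatibility of $\text{Equiv}$.
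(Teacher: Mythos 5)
There is a fatal misreading of the hypothesis. In the definition of relative acyclicity the map $i: Y' \to Y$ is only a \emph{cofibration}, not a trivial cofibration. Both of your extension steps --- lifting $\chi': Y' \to U$ to $\chi: Y \to Y \to U$ using fibrancy of $U$, and lifting the section $\sigma'$ against the fibration $\text{Equiv}_Y(X,Z) \to Y$ --- are lifts of a cofibration against a fibration, which the model structure does not provide. (If $i$ were acyclic the condition would be essentially automatic; the whole point of the name ``relatively acyclic'' is that the structure extends along cofibrations that are \emph{not} acyclic.) Relatedly, you discard the given $\mathds{F}$-structure on $f$, but it is indispensable: without knowing that $f$ itself is already classified by $\pi$ in some way, there is no reason a classifying map over $Y'$ should extend to $Y$ at all (e.g.\ $f$ could fail to be small over the complement of $Y'$).

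The paper's argument uses exactly the two ingredients you omit. Both $f$ and $f'$ carry $\mathbf{Rh}_\pi$-structures, hence classifying data $H: Y \to U$ and $H': Y' \to U$; univalence of $\tilde{\mathbf{u}} \to \mathbf{u}$ in $\mathcal{C}$ forces $H \circ I$ and $H'$ to be homotopic, since both classify (up to virtual equivalence) the same fibration over $Y'$. One then invokes the homotopy extension property --- lifting the cofibration $I$ against the trivial fibration $\mathbf{Path}(U) \to U$ --- to replace $H$ by a homotopic $H''$ with $H'' \circ I = H'$ on the nose; this $H''$ is the new structure on $f$ restricting to the given one on $f'$. (A substantial portion of the paper's proof is also devoted to fibrant replacement, since the objects in the square are not assumed fibrant and the $\text{Equiv}$-correspondence requires fibrancy; your sketch silently assumes fibrancy as well.) Your outline of what the structure unpacks to is fine, but without univalence and without the given structure on $f$ the extension step cannot be carried out.
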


 \begin{proof}

 Consider a pullback diagram 
 
\[\begin{tikzcd}
	{X'} && X \\
	\\
	{Y'} && Y
	\arrow["F", two heads, from=1-3, to=3-3]
	\arrow["I"', hook, from=3-1, to=3-3]
	\arrow["{F'}"', two heads, from=1-1, to=3-1]
	\arrow["G", from=1-1, to=1-3]
	\arrow["\lrcorner"{anchor=center, pos=0.125}, draw=none, from=1-1, to=3-3]
\end{tikzcd}\]

\noindent
where $F$ and $F'$ are $\mathbf{Rh}_\pi \times_{\mathds{E}} \mathbf{Fib}-$algebras and $i$ is a cofibration.

The pullback diagram we considered is a homotopy pullback (because $\mathbf{SSet}^{\mathfrak{C}(\mathcal{C}^{op})}$ is right proper), so it corresponds in $Fun(\mathcal{C}^{op}, \mathcal{S})$ to a pullback:

\[\begin{tikzcd}
	{x'} && x \\
	\\
	{y'} && y
	\arrow["f", from=1-3, to=3-3]
	\arrow["i"', from=3-1, to=3-3]
	\arrow["{f'}"', from=1-1, to=3-1]
	\arrow["g", from=1-1, to=1-3]
	\arrow["\lrcorner"{anchor=center, pos=0.125}, draw=none, from=1-1, to=3-3]
\end{tikzcd}\]

\noindent
Likewise, the homotopy classification diagrams for $F$ and $F'$ correspond to pullbacks in $Fun(\mathcal{C}^{op}, \mathcal{S})$:

\[\begin{tikzcd}
	{x'} && {\tilde{\mathbf{u}}} && x \\
	\\
	{y'} && {\mathbf{u}} && y
	\arrow[from=1-3, to=3-3]
	\arrow["{h'}"', from=3-1, to=3-3]
	\arrow["{f'}"', from=1-1, to=3-1]
	\arrow[from=1-1, to=1-3]
	\arrow["\lrcorner"{anchor=center, pos=0.125}, draw=none, from=1-1, to=3-3]
	\arrow[from=1-5, to=1-3]
	\arrow["h", from=3-5, to=3-3]
	\arrow["f", from=1-5, to=3-5]
	\arrow["\lrcorner"{anchor=center, pos=0.125, rotate=-90}, draw=none, from=1-5, to=3-3]
\end{tikzcd}\]

Univalence of $\mathbf{u}$ in $\mathcal{C}$ implies the existence of a homotopy (in the sense given by the global injective model structure) from $H \circ I$ to $H'$ (where $H$ and $H'$ are morphisms in $\mathcal{E}$ corresponding to $h$ and $h'$). 

Since $I$ is a cofibration, the homotopy extension property (that is, the right lifting property of a trivial fibration projection $\mathbf{Path}(U) \to U$ against $I$) implies the existence of $H''$ such that $H' = H'' \circ I$.

\[\begin{tikzcd}[ampersand replacement=\&]
	{Y'} \&\& {\mathbf{Path}(U)} \\
	\\
	Y \&\& U
	\arrow[from=1-1, to=1-3]
	\arrow["I"', from=1-1, to=3-1]
	\arrow[from=1-3, to=3-3]
	\arrow["K", dashed, from=3-1, to=1-3]
	\arrow["H"', from=3-1, to=3-3]
\end{tikzcd}\]

Consider also a homotopy between $H'' \circ F$ and $H \circ F$, and take a solution $L$ of the following lifting problem,

\[\begin{tikzcd}[ampersand replacement=\&]
	X \& Y \& {\tilde{U}} \\
	\\
	{\mathbf{Cyl}(X)} \&\& U
	\arrow[from=1-1, to=1-2]
	\arrow[from=1-1, to=3-1]
	\arrow["{H''}", from=1-2, to=1-3]
	\arrow[two heads, from=1-3, to=3-3]
	\arrow["L", dashed, from=3-1, to=1-3]
	\arrow[from=3-1, to=3-3]
\end{tikzcd}\]
so that the second component of $L$ provides the outer square
\[\begin{tikzcd}[ampersand replacement=\&]
	X \\
	\& {Z''} \&\& {\tilde{U}} \\
	\\
	\& Y \&\& U
	\arrow["\sim"{description}, dashed, from=1-1, to=2-2]
	\arrow[curve={height=-6pt}, from=1-1, to=2-4]
	\arrow[curve={height=6pt}, from=1-1, to=4-2]
	\arrow[from=2-2, to=2-4]
	\arrow[from=2-2, to=4-2]
	\arrow[from=2-4, to=4-4]
	\arrow["{H''}"', from=4-2, to=4-4]
\end{tikzcd}\]
that is also a homotopy pullback, thus the mediating dotted arrow is a weak equivalence.

By Lemma 2.1 in \cite{shulman2013}, this weak equivalence can moreover be promoted to a coherent homotopy equivalence as to induce the one that is part of the $\mathbf{Rh}_\pi$-structure on $F'$. Since $\mathbf{Fib}$ is known to be a relatively acyclic notion of fibred structure, there is also a new $\mathbf{Fib}-$structure on $F$ inducing that on $F'$. Together with $H''$, this provides a new $\mathbf{Rh}_\pi \times_{\mathds{E}} \mathbf{Fib}$-structure on $F$ inducing that on $F'$.
\end{proof}

\begin{definition}
 A notion of fibred structure $\phi: \mathds{F} \to \mathds{E}$ is homotopy invariant if every $\mathds{F}$-algebra is a fibration, and, for any diagram, 

\[\begin{tikzcd}
	{X'} && X \\
	\\
	{Y'} && Y
	\arrow["{f'}"', two heads, from=1-1, to=3-1]
	\arrow["f", two heads, from=1-3, to=3-3]
	\arrow["\sim", from=1-1, to=1-3]
	\arrow["\sim"', from=3-1, to=3-3]
\end{tikzcd}\]

\noindent
where $f$ and $f'$ are fibrations and the horizontal maps are weak equivalences, $f$ can be equipped with an $\mathds{F}$-structure if and only so can $f'$ be. 
\end{definition}

 \begin{lemma}
 $\phi: \mathbf{Rh}_\pi \times_{\mathds{E}} \mathbf{Fib} \to \mathds{E}$ is a homotopy invariant notion of fibred structure.
 \end{lemma}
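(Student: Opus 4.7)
The plan is to transfer an $\mathbf{Rh}_\pi$-structure across the weak equivalences in the square by conjugating classifying maps with $v$ and $u$, relying on pullback stability of virtual equivalences together with right properness of the global injective model structure on $\mathcal{E}$. Throughout, every $\mathbf{Rh}_\pi\times_{\mathds{E}}\mathbf{Fib}$-algebra is, by definition, a fibration, so the first clause in the definition of homotopy invariance is immediate; the task is to transport the classifying data.

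For the forward direction, suppose $f$ carries an $\mathbf{Rh}_\pi$-structure witnessed by a classifying map $\chi\colon Y\to U$, pullback $Z=Y\times_U\tilde{U}$, and virtual equivalence $h\colon X\to Z$ over $Y$. I would take $\chi' := \chi\circ v\colon Y'\to U$ as the transferred classifying map and form $Z' := Y'\times_U\tilde{U}$. Since $Z\to Y$ is a fibration (being a pullback of $\pi$) and $v$ is a weak equivalence, right properness makes the induced map $Z'\to Z$ a weak equivalence. Pulling $h$ back along $v$ then yields, by the stability property recorded just after the definition of virtual equivalence, a virtual equivalence $\bar h\colon X\times_Y Y'\to Z'$ over $Y'$. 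The canonical factorization $X'\to X\times_Y Y'$ of $u$ through the pullback is again a weak equivalence: right properness ensures $X\times_Y Y'\to X$ is a weak equivalence, and two-out-of-three against $u$ concludes. The desired virtual equivalence $X'\to Z'$ over $Y'$ is then the composite of this weak equivalence with $\bar h$.

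For the reverse implication, I would proceed symmetrically, replacing compositions with extensions: since every object is cofibrant in the global injective model structure and $U,\tilde{U}$ are fibrant, a classifying map $\chi'\colon Y'\to U$ admits (up to homotopy) an extension $\chi\colon Y\to U$ along $v$, and a parallel argument on the total spaces using the homotopy lifting property of $\pi$ transports the virtual equivalence back to $f$.

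The main obstacle is justifying that the final composite in the forward direction is itself a virtual equivalence, i.e.\ that precomposition with a weak equivalence preserves the property of being classified by a lift into $\text{Equiv}_{Y'}(-,Z')$. This reduces to the homotopy invariance of $\text{Equiv}_{Y'}(-,Z')$ in its first argument under weak equivalences of fibrations, which is a standard consequence of its construction as a subobject of the internal hom (cf.\ Section~4 of \cite{shulman2013}); the delicate point is to ensure that the intermediate objects $X\times_Y Y'$ and $Z'$ are fibrant, possibly by taking fibrant replacements compatibly with the classifying map, so that virtual equivalence coincides with genuine weak equivalence over the base and the transfer goes through cleanly.
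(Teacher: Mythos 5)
Your outline is right about the easy ingredients (every $\mathbf{Rh}_\pi\times_{\mathds{E}}\mathbf{Fib}$-algebra is a fibration by construction; virtual equivalences pull back; right properness plus two-out-of-three makes $X'\to X\times_Y Y'$ a weak equivalence), but the step you yourself flag as ``the main obstacle'' is a genuine gap, and it is exactly where the content of the lemma lies. An $\mathbf{Rh}_\pi$-structure is not a property of a map but \emph{data}: a lift of the base to $\text{Equiv}_{Y'}(X',Z')$. Over a base $Y'$ that is not injectively fibrant (and nothing forces it to be), lifts into the Equiv-object do not correspond to weak equivalences over $Y'$, virtual equivalences cannot simply be composed with weak equivalences, and there is no ``standard'' contravariant homotopy invariance of $\text{Equiv}_{Y'}(-,Z')$ to invoke: the universal property from Section~4 of \cite{shulman2013} is only available when the fibrations involved have fibrant bases and total spaces. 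So ``compose $\bar h$ with the comparison weak equivalence and conclude it is again a virtual equivalence'' does not go through as stated, and your closing suggestion --- take fibrant replacements compatibly with the classifying map --- is not a routine patch but is precisely the argument that has to be (and, in the paper, is) carried out.

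Concretely, the paper's proof fixes a universe $\tilde V\to V$ classifying both $f$ and $f'$, factors the classifying maps $Y\to U\times V$ and $Y'\to V$ as (trivial cofibration, fibration), and thereby produces fibrant replacements $Y_f$, $Y'_f$ over the fibrant universes together with fibrations $X_f\to Y_f$, $X'_f\to Y'_f$ pulled back from $\tilde V$ which restrict to $f$ and $f'$. The given lift $Y\to\mathbf{Eq}(U,V)$ is then extended along the trivial cofibration $Y\to Y_f$ using that $\mathbf{Eq}(U,V)\to U\times V$ is a fibration; in this all-fibrant setting virtual equivalences are honest weak equivalences, so the structure transfers across the weak equivalence $Y'_f\to Y_f$ by pulling back $\tilde U$ and comparing total spaces, and finally restricts to $f'$ by precomposing $Y'\to Y'_f$ --- which recovers $X'$ on the nose exactly because $Y'\to V$ classifies $f'$. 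That compatibility between the replacement and the classifying map is the substance missing from your sketch: without it, a structure on a fibrant replacement of $f'$ does not descend to $f'$ by mere precomposition. Your reverse direction has the same defect: extending $\chi'$ along $v$ only up to homotopy does not transport the Equiv-lift; the paper instead exploits that $Y'_f\to Y_f$ is a weak equivalence between fibrant-cofibrant objects and hence admits a homotopy inverse.
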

 
 \begin{proof}
Consider the following diagram, where $f$ and $f'$ are fibrations and the horizontal maps are weak equivalences:

\[\begin{tikzcd}
	{X'} && X \\
	\\
	{Y'} && Y
	\arrow["{f'}"', two heads, from=1-1, to=3-1]
	\arrow["\sim"', from=3-1, to=3-3]
	\arrow["f", two heads, from=1-3, to=3-3]
	\arrow["\sim", from=1-1, to=1-3]
\end{tikzcd}\]

Suppose that $f$ has moreover $\mathbf{Rh}_\pi$-structure. We must prove that $f'$ also has one.

Now form the following two pullback squares where $Z$ is part of the $\mathbf{Rh}_\pi$-structure on $X \to Y$. 

\[\begin{tikzcd}
	{X'} && {X_f} \\
	& {Z'} && {Z} && {\tilde{U}} \\
	\\
	& {Y'} && {Y} && U
	\arrow[two heads, from=1-3, to=4-4]
	\arrow[from=2-4, to=4-4]
	\arrow[from=4-4, to=4-6]
	\arrow[from=2-6, to=4-6]
	\arrow[from=2-4, to=2-6]
	\arrow["\sim", from=1-3, to=2-4]
	\arrow["\sim"', hook, from=4-2, to=4-4]
	\arrow[from=1-1, to=4-2]
	\arrow[from=2-2, to=4-2]
	\arrow["\sim", from=1-1, to=2-2]
	\arrow["\lrcorner"{anchor=center, pos=0.125}, draw=none, from=2-4, to=4-6]
	\arrow["\lrcorner"{anchor=center, pos=0.125}, draw=none, from=2-2, to=4-4]
	\arrow["\sim", from=2-2, to=2-4]
	\arrow["\sim", from=1-1, to=1-3]
\end{tikzcd}\]

The map $X' \to Z'$ is an equivalence (by the $2$-out-of-$3$ property), so we get an $\mathbf{Rh}_\pi$-structure on $X' \to Y'$.

For the converse, we would like to use a homotopy inverse of morphisms defined by the square in the statement. It would be possible to find such an inverse if $f$ and $f'$ were both fibrant-cofibrant objects (for the injective model structure on the arrow category). 
We therefore consider fibrant replacement $g : X_f \to Y_f$ and $g' : X'_f \to Y'_f$ of $f$ and $f'$ respectively:

\[\begin{tikzcd}[ampersand replacement=\&]
	\&\&\& {X_f'} \&\& {X_f} \\
	{X'} \&\& X \\
	\&\&\& {Y_f'} \&\& {Y_f} \\
	{Y'} \&\& Y
	\arrow["{g'}"', two heads, from=1-4, to=3-4]
	\arrow["g", two heads, from=1-6, to=3-6]
	\arrow["\sim"{description}, hook, from=2-1, to=1-4]
	\arrow["\sim", from=2-1, to=2-3]
	\arrow["{f'}"', two heads, from=2-1, to=4-1]
	\arrow["\sim"{description}, hook, from=2-3, to=1-6]
	\arrow["f", two heads, from=2-3, to=4-3]
	\arrow["\sim"{description}, hook, from=4-1, to=3-4]
	\arrow["\sim"', from=4-1, to=4-3]
	\arrow["\sim"{description}, hook, from=4-3, to=3-6]
\end{tikzcd}\]

Now, $g$ and $g'$ are fibrant-cofibrant objects (for the injective model structure on the arrow category) that are isomorphic in the corresponding homotopy category. Therefore, there exists a weak equivalence $g' \to g$. 

We solve the following lifting problem (in the arrow category), as to endow $g'$ with an $\mathbf{Rh}_\pi$-structure,

\[\begin{tikzcd}[ampersand replacement=\&]
	\&\&\& {\tilde{U}} \\
	{X'} \\
	\&\& {X_f'} \& U \\
	{Y'} \\
	\&\& {Y_f'}
	\arrow[two heads, from=1-4, to=3-4]
	\arrow[from=2-1, to=1-4]
	\arrow["\sim"{description}, hook, from=2-1, to=3-3]
	\arrow["{f'}"', two heads, from=2-1, to=4-1]
	\arrow[dashed, from=3-3, to=1-4]
	\arrow["{g'}"', two heads, from=3-3, to=5-3]
	\arrow[from=4-1, to=3-4]
	\arrow["\sim"{description}, hook, from=4-1, to=5-3]
	\arrow[dashed, from=5-3, to=3-4]
\end{tikzcd}\]

\[\begin{tikzcd}[ampersand replacement=\&]
	{X'} \\
	\& {Z'} \&\& {X_f'} \\
	\&\&\&\& {Z_f'} \&\&\& {\tilde{U}} \\
	\\
	\& {Y'} \\
	\&\&\&\& {Y_f'} \&\&\& U
	\arrow["\sim"{description}, dashed, from=1-1, to=2-2]
	\arrow["\sim"{description}, hook, from=1-1, to=2-4]
	\arrow[curve={height=-30pt}, from=1-1, to=3-8]
	\arrow["{f'}"', two heads, from=1-1, to=5-2]
	\arrow["\sim"{description}, hook, from=2-2, to=3-5]
	\arrow[curve={height=-24pt}, from=2-2, to=3-8]
	\arrow[from=2-2, to=5-2]
	\arrow["\lrcorner"{anchor=center, pos=0.125}, draw=none, from=2-2, to=6-5]
	\arrow[dashed, from=2-4, to=3-5]
	\arrow[from=2-4, to=3-8]
	\arrow["{g'}"', two heads, from=2-4, to=6-5]
	\arrow[from=3-5, to=3-8]
	\arrow[from=3-5, to=6-5]
	\arrow["\lrcorner"{anchor=center, pos=0.125}, draw=none, from=3-5, to=6-8]
	\arrow[two heads, from=3-8, to=6-8]
	\arrow["\sim"{description}, hook, from=5-2, to=6-5]
	\arrow[curve={height=-18pt}, from=5-2, to=6-8]
	\arrow[from=6-5, to=6-8]
\end{tikzcd}\]
where the mediating map $X'_f \to Z'_f$ is seen to be a weak equivalence by the $2$-out-of-$3$ property.

By using the first part of the proof on the composite weak equivalence $f \to g \to g'$, we see that $f$ can be endowed with an $\mathbf{Rh}_\pi$-structure.
\end{proof}

We are now in a position to deduce, by theorem 5.22 in \cite{shulman2019}, that there is a univalent universe $\pi': \tilde{\mathscr{U}} \to \mathscr{U}$ classifying the fibrations that can be equipped with an $\mathbf{Rh}_\pi$-structure (and that are small enough, i.e, that are $\kappa$-presentable as mentioned in Remark \ref{size_issue}). Write $\overline{\mathcal{C}}^{**}$ for the smallest full subcategory of $\mathbf{SSet}_{/\mathcal{C}}$ containing the objects of $\overline{\mathcal{C}}^*$, $\tilde{\mathscr{U}}$ and $\mathscr{U}$, and stable under the appropriate constructions (as in Definition \ref{cstar}).
Those morphisms classified by $\pi'$ hence include the fibrations, between objects in $\overline{\mathcal{C}}^{**}$, that are equivalent to morphisms induced by an edge in $\mathbf{S}$.

\begin{proposition}
 The object $\mathscr{U}$ is equivalent to $U$ (hence the object $\tilde{\mathscr{U}}$ is equivalent to $\tilde{U}$).
\end{proposition}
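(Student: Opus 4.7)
The plan is to construct mutually inverse classifying maps $\chi : U \to \mathscr{U}$ and $\chi' : \mathscr{U} \to U$ and to verify they are homotopy inverses by invoking univalence twice: once for $\pi'$ (which holds by Shulman's construction) and once for $\pi$ itself in $\mathcal{E}$ (which must be transferred from the assumed univalence of $\pi : \tilde{\mathbf{u}} \to \mathbf{u}$ in $\mathcal{C}$).

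First I would observe that $\pi : \tilde{U} \to U$ carries a tautological $\mathbf{Rh}_\pi$-algebra structure, given by the identity homotopy classification diagram on $\pi$ itself. Applying the classifying property of $\pi'$ from Theorem 5.22 of \cite{shulman2019} yields a map $\chi : U \to \mathscr{U}$ together with a pullback exhibiting $\pi$ as the pullback of $\pi'$ along $\chi$. Dually, $\pi'$ is itself an $(\mathbf{Rh}_\pi \times_{\mathds{E}} \mathbf{Fib})$-algebra (being classified by $\mathrm{id}_{\mathscr{U}}$), and the $\mathbf{Rh}_\pi$-part of this structure provides a map $\chi' : \mathscr{U} \to U$ together with a virtual equivalence from $\pi'$ to the pullback of $\pi$ along $\chi'$.

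Next I would verify that $\chi \circ \chi' \simeq \mathrm{id}_{\mathscr{U}}$: both endomorphisms of $\mathscr{U}$ arise as classifying maps for $\mathbf{Rh}_\pi$-algebras isomorphic (in $\mathbf{Rh}_\pi(\mathscr{U})$) to $\pi'$ itself, so the univalence of $\pi'$ that is built into its universal property forces them to be homotopic. This step is essentially formal once Shulman's theorem is in hand.

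The last and key step is the symmetric identity $\chi' \circ \chi \simeq \mathrm{id}_U$. By construction, both $\chi' \circ \chi$ and $\mathrm{id}_U$ yield, by pullback of $\pi$, a fibration virtually equivalent to $\pi$; to conclude they are homotopic one needs $\pi$ to satisfy the model-categorical univalence of Definition \ref{univfib} in $\mathcal{E}$. This I would derive from the quasicategorical univalence of $\pi : \tilde{\mathbf{u}} \to \mathbf{u}$ through the equivalence $\mathcal{C} \simeq \mathbf{N}_\Delta(\overline{\mathcal{C}})$: since the canonical map $\alpha : \mathcal{C}_{/\mathbf{u}} \to \mathcal{O}_\mathcal{C}^{(\pi)}$ is an equivalence in $\mathcal{C}$, its transport along the nerve equivalence—combined with the fact that homotopy pullbacks and weak equivalences in $\mathcal{E}$ correspond to pullbacks and equivalences in $\mathbf{N}_\Delta(\mathcal{E})$—forces the uniqueness-up-to-homotopy of pullback presentations required by Voevodsky's formulation. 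Bridging the quasicategorical and model-categorical formulations of univalence for $\pi$ is the main obstacle; once it is in place, the equivalence $\mathscr{U} \simeq U$ (and hence $\tilde{\mathscr{U}} \simeq \tilde{U}$, obtained by pulling back along the equivalence $\chi$) follows formally.
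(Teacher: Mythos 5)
Your proposal is correct and follows essentially the same route as the paper: construct the two classifying maps (the paper's $\phi : U \to \mathscr{U}$ and $\psi : \mathscr{U} \to U$, your $\chi$ and $\chi'$), use the model-categorical univalence of $\pi'$ supplied by Shulman's theorem for the composite on the $\mathscr{U}$ side, and appeal to the assumed univalence of $\tilde{\mathbf{u}} \to \mathbf{u}$ for the composite on the $U$ side. The only difference is the packaging of the last step: where you propose to first transfer a full model-categorical univalence statement for $\pi$ to $\mathcal{E}$ (which you flag as the main obstacle), the paper sidesteps this by transporting just the two specific pullback squares presenting $\pi$ along $\psi \circ \phi$ and $id_U$ to $Fun(\mathcal{C}^{op}, \mathcal{S})$, hence to $\mathcal{C}$, where the quasicategorical univalence of $\mathbf{u}$ (uniqueness up to homotopy of classifying maps) directly yields the required homotopy.
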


\begin{proof}

By definition of $\pi'$, there is a pullback square:

\[\begin{tikzcd}
	{\tilde{U}} && {\tilde{\mathscr{U}}} \\
	\\
	U && {\mathscr{U}}
	\arrow["\pi"', from=1-1, to=3-1]
	\arrow["{\pi'}", from=1-3, to=3-3]
	\arrow["\phi"', from=3-1, to=3-3]
	\arrow[from=1-1, to=1-3]
	\arrow["\lrcorner"{anchor=center, pos=0.125}, draw=none, from=1-1, to=3-3]
\end{tikzcd}\]

\noindent
Because $\pi'$ can be equipped with an $\mathbf{Rh}_\pi$-structure, we also have a homotopy classification diagram:

\[\begin{tikzcd}
	{\tilde{\mathscr{U}}} & {Z_{\mathscr{U}}} && {\tilde{U}} \\
	\\
	& {\mathscr{U}} && U
	\arrow["\pi", from=1-4, to=3-4]
	\arrow[from=1-2, to=3-2]
	\arrow["\psi"', from=3-2, to=3-4]
	\arrow[from=1-2, to=1-4]
	\arrow["\sim", from=1-1, to=1-2]
	\arrow["{\pi'}"', from=1-1, to=3-2]
	\arrow["\lrcorner"{anchor=center, pos=0.125}, draw=none, from=1-2, to=3-4]
\end{tikzcd}\]

Univalence will imply that $\phi$ and $\psi$ are homotopy inverses of each other.
Precisely, the characterization \ref{univfib} of a fibration being univalent implies that the two bottom arrows in the following two diagrams ($\phi \circ \psi$ and $id_\mathscr{U}$) belong to the same contractible space (in particular, they are homotopic):

\[\begin{tikzcd}
	{\tilde{\mathscr{U}}} & {Z_{\mathscr{U}}} && {\tilde{U}} && {\tilde{\mathscr{U}}} && {\tilde{\mathscr{U}}} && {\tilde{\mathscr{U}}} \\
	\\
	& {\mathscr{U}} && U && {\mathscr{U}} && {\mathscr{U}} && {\mathscr{U}}
	\arrow["\pi", from=1-4, to=3-4]
	\arrow[from=1-2, to=3-2]
	\arrow["\psi"', from=3-2, to=3-4]
	\arrow[from=1-2, to=1-4]
	\arrow["\sim", from=1-1, to=1-2]
	\arrow["{\pi'}"', from=1-1, to=3-2]
	\arrow["\phi"', from=3-4, to=3-6]
	\arrow["{\pi'}", from=1-6, to=3-6]
	\arrow[from=1-4, to=1-6]
	\arrow["{ id_{\mathscr{U}}}"', from=3-8, to=3-10]
	\arrow["{\pi'}", from=1-10, to=3-10]
	\arrow["{\pi'}"', from=1-8, to=3-8]
	\arrow[from=1-8, to=1-10]
	\arrow["\lrcorner"{anchor=center, pos=0.125}, draw=none, from=1-8, to=3-10]
	\arrow["\lrcorner"{anchor=center, pos=0.125}, draw=none, from=1-4, to=3-6]
	\arrow["\lrcorner"{anchor=center, pos=0.125}, draw=none, from=1-2, to=3-4]
\end{tikzcd}\]

\noindent
Likewise, the following two diagrams correspond to two pullback squares in $Fun(\mathcal{C}^{op}, \mathcal{S})$ (hence in $\mathcal{C}$), so that $\psi \circ \phi$ and $id_U$ are also homotopic by univalence of the morphism $\tilde{\mathbf{u}} \to \mathbf{u}$.

\[\begin{tikzcd}
	{\tilde{U}} && {\tilde{\mathscr{U}}} & {Z_{\mathscr{U}}} & {\tilde{U}} && {\tilde{U}} && {\tilde{U}} \\
	\\
	U && {\mathscr{U}} && U && U && U
	\arrow["\pi", from=1-5, to=3-5]
	\arrow[from=1-4, to=3-3]
	\arrow["\psi"', from=3-3, to=3-5]
	\arrow[from=1-4, to=1-5]
	\arrow["\sim", from=1-3, to=1-4]
	\arrow["{\pi'}"', from=1-3, to=3-3]
	\arrow["{ id_U}"', from=3-7, to=3-9]
	\arrow["\pi", from=1-9, to=3-9]
	\arrow["\pi"', from=1-7, to=3-7]
	\arrow[from=1-7, to=1-9]
	\arrow["\lrcorner"{anchor=center, pos=0.125}, draw=none, from=1-7, to=3-9]
	\arrow["\lrcorner"{anchor=center, pos=0.125}, draw=none, from=1-4, to=3-5]
	\arrow["\phi"', from=3-1, to=3-3]
	\arrow[from=1-1, to=1-3]
	\arrow["\pi"', from=1-1, to=3-1]
	\arrow["\lrcorner"{anchor=center, pos=0.125}, draw=none, from=1-1, to=3-3]
\end{tikzcd}\]
 \end{proof}

Since $\tilde{\mathscr{U}}$ and $\mathscr{U}$ are equivalent to objects already in $\overline{\mathcal{C}}^*$, we deduce the following (in a sense, specializing Theorem \ref{t1}) :

\begin{theorem}
 \label{main}
 $\overline{\mathcal{C}}^{**}$ yields an interpretation of HoTT with one univalent universe.
\end{theorem}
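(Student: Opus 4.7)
The plan is to assemble the previous results: show that $\overline{\mathcal{C}}^{**}$ carries all the structure of $\overline{\mathcal{C}}^*$ (hence all MLTT constructors as in the earlier theorem), and then verify that the universal fibration $\pi': \tilde{\mathscr{U}} \to \mathscr{U}$ constructed via Shulman's theorem interprets a univalent universe in the type-theoretic sense.

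First, I would argue that $\overline{\mathcal{C}}^{**}$ is still a simplicial $\pi$-tribe. By its definition, it is obtained by closing $\overline{\mathcal{C}}^* \cup \{\tilde{\mathscr{U}},\mathscr{U}\}$ under exactly the three operations listed in Definition \ref{cstar}. Consequently, the full subcategory of $(\overline{\mathcal{C}}^{**})^\rightarrow$ spanned by the fibrations yields a comprehension category supporting dependent sums, dependent products along fibrations, and intensional identity types by the verbatim argument used to establish the earlier theorem. Moreover, the proof of Proposition \ref{strict_struct} applies without change to $\overline{\mathcal{C}}^{**}$ once we observe that $\mathscr{U}$ and $\tilde{\mathscr{U}}$ are equivalent to objects of $\overline{\mathcal{C}}$ by the previous proposition, so the tower argument gives $\mathbf{N}_\Delta(\overline{\mathcal{C}}^{**}) \simeq \mathcal{C}$.

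Next, I would interpret the universe by taking, in the comprehension category, the closed type $\vdash \mathscr{U}\ \mathbf{type}$ to correspond to the object $\mathscr{U}$ and the decoding family $a: \mathscr{U} \vdash \mathrm{El}(a)\ \mathbf{type}$ to be (the strictification of) $\pi': \tilde{\mathscr{U}} \to \mathscr{U}$. Since $\pi'$ classifies (up to equivalence) the fibrations carrying an $\mathbf{Rh}_\pi$-structure between sufficiently small objects of $\overline{\mathcal{C}}^{**}$, the universe is closed under the relevant type formers by construction of the notion of fibred structure $\mathbf{Rh}_\pi \times_{\mathds{E}} \mathbf{Fib}$ and its interaction with the $\pi$-tribe operations. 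To obtain the univalence axiom, I would invoke the standard translation (as in \cite{kl2012} and Section 8 of \cite{shulman2019}) between Voevodsky's fibrational univalence condition of Definition \ref{univfib}, which holds for $\pi'$ as a consequence of Shulman's Theorem 5.22, and the type-theoretic univalence axiom: the object $\text{Equiv}_{\mathscr{U} \times \mathscr{U}}(\pi_1^*\tilde{\mathscr{U}},\pi_2^*\tilde{\mathscr{U}})$ representing equivalences of fibres is equivalent (over $\mathscr{U} \times \mathscr{U}$) to the path object $\mathbf{Path}(\mathscr{U})$, precisely because univalence of $\pi'$ forces the canonical map from identities to equivalences to be a weak equivalence.

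The main obstacle, as flagged in the second remark of Section 2, is not the existence of the structure but its \emph{strictification}: substitution (pullback) along arbitrary morphisms only preserves dependent products, identity types, and the universe up to coherent isomorphism, whereas the type-theoretic rules demand strict equality. This is handled implicitly by applying the left-splitting of \cite{lw2015local} to the comprehension category $(\overline{\mathcal{C}}^{**})^\rightarrow_{\mathrm{fib}} \to \overline{\mathcal{C}}^{**}$, which strictifies substitution while preserving all the higher structure, and by the fact that the universe $\pi'$ is itself strictifiable in this framework (again, as carried out in detail in Section 9 of \cite{shulman2019}). Once this is in place, the simplicial $\pi$-tribe structure together with the univalent universe gives an interpretation of HoTT with one univalent universe.
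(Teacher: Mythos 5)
Your proposal follows essentially the same route as the paper: the MLTT structure comes from the simplicial $\pi$-tribe/comprehension category on $\overline{\mathcal{C}}^{*}$ (with the left-splitting of \cite{lw2015local} handling strictness), the univalent universe comes from Shulman's Theorem 5.22 applied to $\mathbf{Rh}_\pi \times_{\mathds{E}} \mathbf{Fib}$ via the three verified lemmas, and the equivalence $\mathscr{U} \simeq U$ keeps $\overline{\mathcal{C}}^{**}$ equivalent to $\mathcal{C}$, which is exactly how the paper deduces the theorem. The only caution is your aside that the universe is closed under the type formers "by construction": that requires corresponding closure properties of the class $\mathbf{S}$ (as in condition 4 of the elementary-topos definition) and is not guaranteed by, nor needed for, the theorem under the hypotheses of this section.
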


\begin{proof}
 By construction, the fibration $\pi': \tilde{\mathscr{U}} \to \mathscr{U}$ provides a universe closed under $\Sigma$ and $\Pi$ type constructors and which classifies the "small" fibrations (where we mean the fibrations that are both $\kappa$-presentable and correspond to $\pi$-small morphisms in $\mathcal{C}$). 
 \end{proof}

Also, we note that the inclusion $\overline{\mathcal{C}}^{*} \hookrightarrow \overline{\mathcal{C}}^{**}$ is an equivalence.

\section{Adding disjoint sum, pushout and W-types}
\label{s4}

Substitution, as well as $\Sigma$-types, identity types, and $\Pi$-types, are arguably at the core of Martin-Löf type theory, and, as such, have required a specific treatment involving, for instance, strict pullbacks and dependent products. On the other hand, some other (higher) inductive type can be interpreted following a more general pattern, relying mainly on the existence of some structure in the quasicategory $\mathcal{C}$. For the definition of the types we are looking to add up, we follow \cite{ls2020} and \cite{lw2015local}.

In this section, we suggestively adapt our notations to the usual ones in type theory, and we think of objects of $\mathbf{SSet}^{\mathfrak{C}(\mathcal{C}^{op})}$ as (potential) contexts of the model of MLTT we are building, and fibrations between them as dependency (i.e, projections from an extended context to a base context). Precisely, we consider given a $\pi$-tribe $\overline{\mathcal{C}}^{**}$ as before, whose objects are the "current" contexts, and we look to add up new objects as to model new type constructors. Keep in mind that we want to do this while maintaining the condition that all the objects playing the role of contexts/types must be fibrant and essentially representable.

We start with disjoint sum ($+$-types).

\begin{proposition}
\label{sumtype}
 Suppose that $\mathcal{C}$ has finite coproducts.
 
 Consider two types $A$ and $B$ in context $\Gamma$ corresponding to two fibrations $A \to \Gamma$ and $B \to \Gamma$.
 
 It is possible to construct in $\mathbf{SSet}^{\mathfrak{C}(\mathcal{C}^{op})}$ an associated $+$-type $A+B$ whose underlying object in essentially representable.
 
 \end{proposition}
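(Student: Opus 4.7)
The plan is to imitate the proof of Proposition \ref{strict_struct}, replacing pullbacks and dependent products by finite coproducts. Using essential representability, first reduce to the case $A = \overline{y}(a)$, $B = \overline{y}(b)$ for fibrations $a \to \Gamma$, $b \to \Gamma$ in $\mathcal{C}$ (viewing $\Gamma$ as an object of $\mathcal{C}$ via the equivalence $\mathcal{C} \simeq \mathbf{N}_\Delta(\overline{\mathcal{C}})$). Since $\mathcal{C}$ has finite coproducts and is locally cartesian closed, the slice quasicategory $\mathcal{C}_{/\Gamma}$ inherits finite coproducts, giving an object $a +_\Gamma b \to \Gamma$.

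I would next form the ordinary coproduct $A \sqcup B$ in $\mathbf{SSet}^{\mathfrak{C}(\mathcal{C}^{op})}$ and apply a (cofibration, trivial fibration) factorization to the induced map $A \sqcup B \to \Gamma$, obtaining a fibration $A + B \to \Gamma$. Since every object is cofibrant in the injective model structure, $A+B$ presents the homotopy coproduct in the slice $\mathbf{SSet}^{\mathfrak{C}(\mathcal{C}^{op})}_{/\Gamma}$, so the construction is invariant (up to equivalence) under weak equivalences of $A$ and $B$ and does not depend on the choice of representatives in their equivalence classes.

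The crucial step is the equivalence $A + B \simeq \overline{y}(a +_\Gamma b)$. For this, I would use that the simplicial nerve of the slice injective model category $\mathbf{SSet}^{\mathfrak{C}(\mathcal{C}^{op})}_{/\Gamma}$ is equivalent, as an $(\infty,1)$-category, to the slice $\mathbf{N}_\Delta(\mathbf{SSet}^{\mathfrak{C}(\mathcal{C}^{op})})_{/\Gamma}$, and that the simplicial nerve carries homotopy colimits of fibrant-cofibrant diagrams to $(\infty,1)$-colimits. Through the zig-zag of equivalences $\mathcal{C}_{/\Gamma} \simeq \mathbf{N}_\Delta(\overline{\mathcal{C}})_{/\Gamma}$, both $A+B$ and $\overline{y}(a +_\Gamma b)$ then qualify as coproducts of $A \simeq \overline{y}(a)$ and $B \simeq \overline{y}(b)$ in the appropriate slice quasicategory, and are therefore equivalent by uniqueness of $(\infty,1)$-colimits.

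Finally, the constructors $\mathrm{inl}$ and $\mathrm{inr}$ are the canonical coprojections, and the dependent eliminator is obtained, given a fibration $P : C \to A+B$ and sections of $\mathrm{inl}^* P$ and $\mathrm{inr}^* P$, by applying the coproduct universal property in $\mathbf{SSet}^{\mathfrak{C}(\mathcal{C}^{op})}_{/A+B}$. The hard part will be ensuring that this really interprets the strict $+$-elimination rule: this requires the coproducts of $\mathcal{C}$ to be disjoint and pullback-stable, which is automatic when $\mathcal{C}$ is a higher topos, while strict stability under substitution is deferred to the splitting procedure made implicit throughout the paper.
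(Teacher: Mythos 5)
There is a genuine gap, and it is at the step you call crucial. Your $A+B$ is built from the coproduct $A \sqcup B$ computed in $\mathbf{SSet}^{\mathfrak{C}(\mathcal{C}^{op})}$, i.e.\ a homotopy coproduct in the presheaf quasicategory $Fun(\mathcal{C}^{op},\mathcal{S})_{/\Gamma}$, whereas $\overline{y}(a+_\Gamma b)$ is the image under Yoneda of a coproduct taken in $\mathcal{C}_{/\Gamma}$. The Yoneda embedding preserves limits but \emph{not} colimits, so these two objects are in general not equivalent, and "uniqueness of $(\infty,1)$-colimits" does not apply because the two coproducts live in different quasicategories (the essentially representable full subcategory is not closed under, nor does its inclusion preserve, coproducts). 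Concretely, for $\mathcal{C}=\mathcal{S}$ and $a=b=\gamma=\ast$, the presheaf $\overline{y}(\ast)\sqcup\overline{y}(\ast)$ evaluated at $S^0$ has two components while $\overline{y}(\ast\amalg\ast)(S^0)=\mathrm{Map}(S^0,S^0)$ has four. So your $A+B$ is typically \emph{not} essentially representable, which is precisely the conclusion to be proved, and it does not land (up to equivalence) in the model $\overline{\mathcal{C}}^{*}$/$\overline{\mathcal{C}}^{**}$. This is exactly why the paper does not take the coproduct in the presheaf category: it imports the coproduct from $\mathcal{C}$, factoring $A \amalg B \to \overline{y}(a\amalg b)$ by a (cofibration, trivial fibration) and then $(A+B)_0 \to \Gamma$ by a (trivial cofibration, fibration), so that $A+B\simeq\overline{y}(a\amalg b)$ by construction.

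Two further problems. First, your factorization is mis-stated: a (cofibration, trivial fibration) factorization of $A\sqcup B \to \Gamma$ makes the second map a trivial fibration, i.e.\ $A+B\simeq\Gamma$; what you would want for a fibrant replacement over $\Gamma$ is the (trivial cofibration, fibration) factorization. Second, because $\overline{y}(a\amalg b)$ is \emph{not} a coproduct of $A$ and $B$ in the ambient category, the eliminator cannot be produced by "the coproduct universal property in $\mathbf{SSet}^{\mathfrak{C}(\mathcal{C}^{op})}_{/A+B}$": the universal property available (via $\mathcal{C}$) only yields a candidate $\mathrm{case}_0$ which is a section of $p_C$ up to homotopy and satisfies the computation rule up to homotopy. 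The actual content of the paper's proof is the rectification: first lift against the fibration $p_C$ to replace $\mathrm{case}_0$ by a strict section, then use the homotopy extension property of the cofibration $A\amalg B \hookrightarrow A+B$ against the path-object trivial fibration of $C$ to make $\mathrm{case}\circ(\mathrm{inl},\mathrm{inr})=d$ hold strictly. Finally, disjointness of coproducts is not needed for the $+$-rules, and pullback-stability already follows from local cartesian closure, so no topos hypothesis should enter.
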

 
 \begin{proof}
 
 Let $a$, $b$ and $\gamma$ be objects of $\mathcal{C}$ such that $y(a)$, $y(b)$ and $y(\gamma)$ are equivalent to $A$, $B$ and $\Gamma$ (these exist since all the objects of $\overline{\mathcal{C}}^{**}$ are equivalent to objects of the form $\overline{y}(x)$ for $x$ an object of $\mathcal{C}$), and consider weak equivalences $y(a) \to A$, $y(b) \to B$ and $y(\gamma) \to \Gamma$ (this is possible since $y(a)$, $y(b)$ and $y(\gamma)$ are cofibrant and $A$, $B$ and $\Gamma$ are fibrant).
 Let $a \to \gamma$ and $b \to \gamma$ be maps such that $y(a) \to y(\gamma)$ and $y(b) \to y(\gamma)$ are equivalent to the fibrations $A \to \Gamma$ and $B \to \Gamma$.
 
 First, factor $y(a) \amalg y(b) \to y(a\amalg b)$ as a cofibration followed by a trivial fibration (write $Q$ for the middle object), then, form the Reedy (trivial cofibration, pointwise fibration)-factorization of the outer commuting square below,
 
\[\begin{tikzcd}[ampersand replacement=\&]
	{y(a) \amalg y(b)} \&\& P \&\& {A \amalg B} \\
	\\
	\\
	Q \&\& {A+B} \&\& \Gamma \\
	\& {y(a \amalg b)} \&\& {y(\gamma)}
	\arrow["\sim"{description}, hook, from=1-1, to=1-3]
	\arrow["\sim"{description}, curve={height=-12pt}, hook, from=1-1, to=1-5]
	\arrow[hook, from=1-1, to=4-1]
	\arrow["\sim"{description}, two heads, from=1-3, to=1-5]
	\arrow[hook, from=1-3, to=4-3]
	\arrow["s", curve={height=-12pt}, dashed, from=1-5, to=1-3]
	\arrow[from=1-5, to=4-5]
	\arrow["\sim"{description}, hook, from=4-1, to=4-3]
	\arrow["\sim"{description}, curve={height=6pt}, two heads, from=4-1, to=5-2]
	\arrow[two heads, from=4-3, to=4-5]
	\arrow[from=5-2, to=5-4]
	\arrow[curve={height=6pt}, from=5-4, to=4-5]
\end{tikzcd}\]
where $s$ is a section of the trivial fibration $P \to A \amalg B$.

 We will use both the universal properties of the coproduct in $\mathcal{C}$ and in $\mathbf{SSet}^{\mathfrak{C}(\mathcal{C}^{op})}$, and we will rely on the correspondence (unique up to homotopy) between morphisms $x \to z$ in $\mathcal{C}$ and maps $X \to Z$ in $\mathbf{SSet}^{\mathfrak{C}(\mathcal{C}^{op})}$, where $X$ (resp. $Z$) if a fibrant object equivalent to $y(x)$ (resp. $y(z)$).

We now have two injections $(\texttt{inl},\texttt{inr}): A \amalg B \rightarrow (A+B)_0 \rightarrow A+B$ that lie over $\Gamma$ by construction.

Suppose given a type $C$ (with an equivalence $y(c) \to C$) in context $\Gamma, A+B$ (so there is an edge $c \to a \amalg b$), and a morphism $d: A \amalg B \to C$ such that $p_C \circ d = (\texttt{inl},\texttt{inr})$.
These correspond to morphisms $a \to c$ and $b \to c$ in $\mathcal{C}$. There is an induced morphism (defined up to homotopy) $a \amalg b \to c$, which is moreover a section of $c \to a \amalg b$.
We deduce the existence of a morphism $\texttt{case}_0: A+B  \simeq \overline{y}(a \amalg b) \to \overline{y}(c) \rightarrow C$ (we first construct the morphism in homotopy category $\mathbf{Ho}(\mathbf{SSet}^{\mathfrak{C}(\mathcal{C}^{op})})$, then take a morphism realizing it: this is possible because both $A+B$ and $C$ are fibrant-cofibrant), that is a section of $p_C: \Gamma, A+B, C \to \Gamma, A+B$ up to homotopy, and such that $\texttt{case}_0 \circ (\texttt{inl},\texttt{inr})$ is homotopic to $d$.

Firstly, consider a homotopy $H_0: (A+B) \otimes \Delta_1 \to A+B$ from $p_C \circ \texttt{case}_0$ to $id_{A+B}$. The second component of the lift $K_0$ in the following diagram yields the expected section $\texttt{case}_0'$.

\[\begin{tikzcd}
	{A+B} && C \\
	\\
	{(A+B)\otimes \Delta_1} && {A+B}
	\arrow["\sim"{description}, hook, from=1-1, to=3-1]
	\arrow["{H_0}"', from=3-1, to=3-3]
	\arrow["{p_C}", two heads, from=1-3, to=3-3]
	\arrow["{\text{case}_0}", from=1-1, to=1-3]
	\arrow["{K_0}", dashed, from=3-1, to=1-3]
\end{tikzcd}\]

Secondly, consider a homotopy $$H: A \amalg B \to \mathbf{Path}(C)$$ from $\texttt{case}_0' \circ (\texttt{inl},\texttt{inr})$ to $d$. We take this homotopy in $\mathbf{SSet}^{\mathfrak{C}(\mathcal{C}^{op})}_{/A+B}$, which is possible because the composite $a \amalg b \to c \to a \amalg b$ being the identity morphism $a \amalg b \to a \amalg b$ really corresponds to a homotopy in $\mathcal{C}_{/a \amalg b}$.
We consider a solution $K$ to the following lifting problem: 

\[\begin{tikzcd}
	{A \amalg B} && {\mathbf{Path}(C)} \\
	\\
	{A+B} && C
	\arrow["\sim"{description}, two heads, from=1-3, to=3-3]
	\arrow[hook, from=1-1, to=3-1]
	\arrow["K", dashed, from=3-1, to=1-3]
	\arrow[from=3-1, to=3-3]
	\arrow["H", from=1-1, to=1-3]
\end{tikzcd}\]

\noindent
The second component of the homotopy $K$ is a morphism $\texttt{case}: A+B \to C$ such that $\texttt{case} \circ (\texttt{inl},\texttt{inr}) = d$.
\end{proof}

We record a similar result for pushout types:

\begin{proposition}
 Suppose that $\mathcal{C}$ has pushouts.

 Consider three types $A$, $B_1$ and $B_2$ in context $\Gamma$ corresponding to fibrations $A \to \Gamma$ and $B_i \to \Gamma$. Let $f_1 : A \to B_1$ and $f_2 : A \to B_2$ be morphisms above $\Gamma$.
 
 It is possible to construct, in $\mathbf{SSet}^{\mathfrak{C}(\mathcal{C}^{op})}$, an associated pushout type $D$ whose underlying object is essentially representable.
 \end{proposition}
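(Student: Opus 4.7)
The plan is to adapt Proposition \ref{sumtype} to the pushout case, replacing the coproduct $a \amalg b$ in $\mathcal{C}$ by the pushout $d := b_1 \amalg_a b_2$ of a suitable cospan. First, I would pick objects $a, b_1, b_2, \gamma$ of $\mathcal{C}$ with $\overline{y}(a) \simeq A$, $\overline{y}(b_i) \simeq B_i$ and $\overline{y}(\gamma) \simeq \Gamma$, and transport the given fibrations together with $f_1, f_2$ through these equivalences (as in the $+$-type proof) to obtain a cospan $b_1 \leftarrow a \rightarrow b_2$ in $\mathcal{C}_{/\gamma}$. Let $d$ be its pushout in $\mathcal{C}$, with induced map $d \to \gamma$.

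Next, I would form the strict pushout $P$ of the cospan $B_1 \leftarrow A \rightarrow B_2$ in $\mathbf{SSet}^{\mathfrak{C}(\mathcal{C}^{op})}$, after first factoring $A \to B_1$ as a cofibration followed by a trivial fibration so that $P$ computes the homotopy pushout. The universal properties of pushouts in $\mathbf{SSet}^{\mathfrak{C}(\mathcal{C}^{op})}$ and in $\mathcal{C}$ produce a comparison map $P \to \overline{y}(d)$ which one verifies is a weak equivalence. Following the template of Proposition \ref{sumtype}, I would then form a (cofibration, trivial fibration) factorization $P \hookrightarrow D_0 \twoheadrightarrow \overline{y}(d)$, and then a (trivial cofibration, fibration) factorization $D_0 \hookrightarrow D \twoheadrightarrow \Gamma$ of the composite $D_0 \to \overline{y}(d) \to \overline{y}(\gamma) \to \Gamma$. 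This produces the underlying fibration $p_D : D \to \Gamma$ of the pushout type, equipped with structural maps $\text{inl} : B_1 \to D$ and $\text{inr} : B_2 \to D$ obtained by composition with $B_i \to P \to D_0 \to D$, together with a canonical homotopy between $\text{inl} \circ f_1$ and $\text{inr} \circ f_2$ in $\mathbf{SSet}^{\mathfrak{C}(\mathcal{C}^{op})}_{/D}$ coming from the pushout structure of $P$.

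For the eliminator, suppose given a type $C \simeq \overline{y}(c)$ over $\Gamma, D$ together with maps $d_1 : B_1 \to C$ and $d_2 : B_2 \to C$ over $\text{inl}$ and $\text{inr}$, and a homotopy witnessing $d_1 \circ f_1 \simeq d_2 \circ f_2$ over $A$. Transporting this data to $\mathcal{C}$ along the zig-zag $\mathcal{C} \simeq \mathbf{N}_\Delta(\overline{\mathcal{C}})$ yields a compatible pair of morphisms $b_1 \to c$ and $b_2 \to c$, so the universal property of $d$ in $\mathcal{C}$ produces a morphism $d \to c$ which is a section of $c \to d$ up to homotopy. This induces a provisional $\text{case}_0 : D \to C$ that is a section of $p_C$ up to homotopy and restricts up to homotopy to $d_1$ and $d_2$ on $B_1$ and $B_2$. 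Two lifts against the path object $\mathbf{Path}(C)$ then rectify $\text{case}_0$ into the desired eliminator $\text{case} : D \to C$: the first, along the trivial cofibration $D \hookrightarrow D \otimes \Delta_1$, strictifies sectionhood into some $\text{case}_0'$; the second, along the cofibration $P \hookrightarrow D$ with a homotopy taken in $\mathbf{SSet}^{\mathfrak{C}(\mathcal{C}^{op})}_{/D}$, strictifies the agreement of $\text{case}_0'$ with $d_1$ and $d_2$.

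The step I expect to be the main obstacle, and the only genuinely new ingredient compared to Proposition \ref{sumtype}, is ensuring that the second lift is well-defined on $P$: the two homotopies on $B_1$ and $B_2$ must agree, over $A$, in a way compatible with the input compatibility datum, so as to descend to the strict pushout. This coherence ultimately comes from applying the universal property of $d$ in $\mathcal{C}$ not just to morphisms out of the pushout but to homotopies between them, i.e. to maps into a path object for $c$ in $\mathcal{C}_{/\gamma}$, which is available because $\mathcal{C}$ is a quasicategory with pushouts. Apart from this extra homotopy-level invocation of the pushout universal property, the argument transcribes verbatim from the $+$-type case.
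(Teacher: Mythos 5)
Your overall template (transport the cospan to $\mathcal{C}$, take its pushout $d$ there, compare with a homotopy pushout in $\mathbf{SSet}^{\mathfrak{C}(\mathcal{C}^{op})}$, then run the two factorizations and the two path-object lifts exactly as in Proposition \ref{sumtype}) is the same as the paper's, which disposes of the proof in one line: use the homotopy pushout and repeat the $+$-type argument. But there is a genuine gap at exactly the point you flag, and it is created by your choice of model for the homotopy pushout. You take the \emph{strict} pushout $P$ of the cospan after replacing one leg by a cofibration. A map out of such a $P$ is a pair of maps whose restrictions to $A$ agree \emph{strictly}; yet everything you transport back from $\mathcal{C}$ through the zig-zag (the eliminator data $d_1$, $d_2$ and the compatibility over $A$, as well as the structural homotopy between $\text{inl}\circ f_1$ and $\text{inr}\circ f_2$) is only coherent up to homotopy. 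Your proposed remedy --- applying the universal property of $d$ in $\mathcal{C}$ to maps into a path object of $c$ --- produces further homotopies in $\mathcal{C}$, but it does not convert homotopy-agreement over $A$ into the strict agreement needed to descend to the strict pushout $P$, so the ``second lift'' is not well-posed as stated.

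The paper avoids this by using the double mapping cylinder $B_1 \amalg_{A \otimes \Delta_1} B_2$ as the homotopy pushout: a map out of it is by construction a pair of maps out of $B_1$ and $B_2$ \emph{together with} a homotopy between their restrictions to $A$, which is precisely the shape of the eliminator's input, so no descent or strictification over $A$ is required; the glue cell is likewise built in as a map out of $A \otimes \Delta_1$, landing in a path object of the type. The price is that one only obtains a $\Delta_1$-typal pushout in the sense of Definition 4.1 of \cite{ls2020} (the computation rule for the path constructor holds up to a path, not strictly), and the paper claims no more than that; your attempt to strictify the full eliminator, as in the $+$-type case, is aiming for more than is available and is where the argument breaks. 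Replacing your strict pushout by the double mapping cylinder, and weakening the conclusion to a $\Delta_1$-typal pushout, repairs the proof and brings it in line with the paper's.
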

 
 \begin{proof}
 The proof is almost identical to that of Proposition \ref{sumtype}, using the homotopy pushout $B_1 {\amalg}_{A \otimes \Delta_1} B_2$ instead of the coproduct in $\mathbf{SSet}^{\mathfrak{C}(\mathcal{C}^{op})}$. It is easy to see that we can construct a $\Delta_1$-typal pushout as in Definition 4.1 of \cite{ls2020}
 \end{proof}
 
The previous proof pattern seems very general. For instance, we can extend it to $W$-types, provided we work in a category of algebra.

 Let $\mathbf{C}$ be a simplicially enriched category, and consider a polynomial endofunctor $E : \mathbf{SSet}^{\mathbf{C}^{op}}_{/Y} \to \mathbf{SSet}^{\mathbf{C}^{op}}_{/Y}$
\[\begin{tikzcd}[ampersand replacement=\&]
	{\mathbf{SSet}^{\mathbf{C}^{op}}_{/Y}} \&\& {\mathbf{SSet}^{\mathbf{C}^{op}}_{/X}} \&\& {\mathbf{SSet}^{\mathbf{C}^{op}}_{/Z}} \&\& {\mathbf{SSet}^{\mathbf{C}^{op}}_{/Y}}
	\arrow["{H^*}", from=1-1, to=1-3]
	\arrow["{F_*}", from=1-3, to=1-5]
	\arrow["{G_!}", from=1-5, to=1-7]
\end{tikzcd}\]
where $H : X \to Y$, $F : X \to Z$ and $G : Z \to Y$ are morphisms in $\mathbf{SSet}^{\mathbf{C}^{op}}$.
Consider the algebraically-free monad $\mathbb{T}$ on $E$, and the fibrant replacement monad $\mathbb{R}_Y$ on $\mathbf{SSet}^{C^{op}}_{/Y}$ as in \cite{ls2020}. Consider as well their algebraic coproduct $\mathbb{T} + \mathbb{R}_Y$.
\begin{proposition}
\label{alg_ms}
 The category of algebra $\mathbf{Alg}_{\mathbb{T} + \mathbb{R}_Y}$ admits a model structure that is right transferred by the forgetful functor $\mathbf{Alg}_{\mathbb{T} + \mathbb{R}_Y} \to \mathbf{SSet}^{C^{op}}_{/Y}$.
\end{proposition}

\begin{proof}
 We will apply Lemma 2.3 of \cite{schwede2000algebras}, or rather a slightly more general version: we ask that the monad $T$ commutes with $\kappa$-sequential colimits for some cardinal $\kappa$. The proof given there still applies, provided we only consider a factorization when it is obtained by the small object argument, as a cell complex built as a $\lambda$-sequential colimit for $\lambda \geq \kappa$ followed by a (trivial) fibration.
 
  Consider a cardinal $\kappa$ such that $\mathbb{T} + \mathbb{R}_Y$ is $\kappa$-accessible. As discussed in \cite{nlab:transfinite_construction_of_free_algebras}, we know that $\mathbf{Alg}_{\mathbb{T} + \mathbb{R}_Y}$ is locally presentable, and moreover $\kappa$-sequential colimit are computed in $\mathbf{SSet}^{C^{op}}_{/Y}$ since $\mathbb{T} + \mathbb{R}_Y$ preserves them.
  
  As stated in \cite{schwede2000algebras}, this lemma applies when every $T$-algebra admits a path object, and every object is fibrant. However, the proof only relies on every object admitting a $T$-algebra structure to be fibrant, which is certainly the case for $T = \mathbb{T} + \mathbb{R}_Y$. 
  
  We still need to check the first condition. To do so, consider a $\mathbb{T} + \mathbb{R}_Y$-algebra over $A$ and a path object $PA$ for $A$ in $\mathbf{SSet}^{C^{op}}_{/Y}$. 
  We can provide $PA$ with a $\mathbb{R}_Y$-algebra structure, making the factorization a factorization in the category of $\mathbb{T} + \mathbb{R}_Y$-algebras. By definition, such a structure amounts to a coherent lifting operation of a trivial cofibration against $PY \to *$. We define such lifting operation as follows: given a lifting problem as below left, we first use the $\mathbb{R}_Y$-algebra structure on $A \times A$ to provide the lift $h$, and then use any (fixed) lifting operation against the fibration $PA \to A \times A$ to further get a lift $h'$, as below right, which is our chosen solution to the initial lifting problem.
  
\[\begin{tikzcd}[ampersand replacement=\&]
	I \&\& PA \&\& {A \times A} \&\& I \&\& PA \\
	\\
	J \&\& {*} \&\&\&\& J \&\& {A \times A}
	\arrow[from=1-1, to=1-3]
	\arrow["\sim"{description}, hook, from=1-1, to=3-1]
	\arrow[two heads, from=1-3, to=1-5]
	\arrow[two heads, from=1-3, to=3-3]
	\arrow[from=1-5, to=3-3]
	\arrow[from=1-7, to=1-9]
	\arrow["\sim"{description}, hook, from=1-7, to=3-7]
	\arrow[two heads, from=1-9, to=3-9]
	\arrow["h"{pos=0.3}, dashed, from=3-1, to=1-5]
	\arrow[from=3-1, to=3-3]
	\arrow["{h'}", dashed, from=3-7, to=1-9]
	\arrow["h"', from=3-7, to=3-9]
\end{tikzcd}\]
  It also follows that $PA \to A \times A$ is a morphism of $\mathbb{R}_Y$-algebras. Moreover, the fact that the diagonal $A \to A \times A$ is a morphism of $\mathbb{R}_Y$-algebras implies the same for the reflexivity map $A \to PA$.
  We now provide an $E$-algebra structure for $PA$. To do so, remark that the morphism $E(A) \to E(PA)$ is a monomorphism, since $E$ preserves monomorphisms, and it is also a weak equivalence by Ken Brown's lemma, since $E$ preserves trivial fibration. Therefore, it is a trivial cofibration, and the following lifting problem admits a solution:
\[\begin{tikzcd}[ampersand replacement=\&]
	{E(A)} \&\& A \&\& PA \\
	\\
	{E(PA)} \&\& {E(A \times A)} \&\& {A \times A}
	\arrow[from=1-1, to=1-3]
	\arrow["\sim"{description}, two heads, from=1-1, to=3-1]
	\arrow[from=1-3, to=1-5]
	\arrow[two heads, from=1-5, to=3-5]
	\arrow[dashed, from=3-1, to=1-5]
	\arrow[from=3-1, to=3-3]
	\arrow[from=3-3, to=3-5]
\end{tikzcd}\]
This solution provides the $E$-algebra structure, making the factorization a factorization in the category of $E$-algebras.
Overall, we have shown that $PA$ admits a $\mathbb{T} + \mathbb{R}_Y$-algebra structure and that the factorization $A \to PA \to A \times A$ is promoted to a path-object factorization in the category of $\mathbb{T} + \mathbb{R}_Y$-algebras. We can now apply the lemma to conclude.
\end{proof}

\begin{remark}
 Since $T$ is a homotopical functor (it is left Quillen and all objects are cofibrant), it is clear by definition of a category of algebras on monad that $\mathbf{Ho}(\mathbf{Alg}_T)$ is equivalent to $\mathbf{Ho}(\mathbf{Alg}_t)$, where $t : \mathcal{S}^{C^{op}} \to \mathcal{S}^{C^{op}}$ is the $\infty$-monad on the quasicategory of presheaves corresponding to $\mathbf{SSet}^{C^{op}}$.
\end{remark}

\begin{proposition}
 Consider two types $A$ and $B$, in context $\Gamma$ and $\Gamma, A$ respectively, corresponding to two fibrations $H: A \to \Gamma$ and $F: B \to A$. 
 These correspond (up to equivalence) to maps $h: a \to \gamma$ and $f: b \to a$ in $\mathcal{C}$. 
 
 Suppose that the polynomial endofunctor $p$ associated with $h$, $f$ and $g := h \circ f$ (as in the diagram below) admits an initial algebra.
\[\begin{tikzcd}[ampersand replacement=\&]
	{\mathcal{C}_{\gamma}} \&\& {\mathcal{C}_{b}} \&\& {\mathcal{C}_a} \&\& {\mathcal{C}_{\gamma}}
	\arrow["{h^*}", from=1-1, to=1-3]
	\arrow["{f_*}", from=1-3, to=1-5]
	\arrow["{g_!}", from=1-5, to=1-7]
\end{tikzcd}\]
\noindent
 Then we can construct in $\mathbf{SSet}^{\mathfrak{C}(\mathcal{C}^{op})}$ the corresponding $W$-type, and the underlying object is essentially representable. 
\end{proposition}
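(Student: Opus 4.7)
The plan is to adapt the template used in Proposition \ref{sumtype}, replacing the coproduct by the initial $p$-algebra supplied by the hypothesis. Throughout, I will exploit (i) the equivalence of $(\infty,1)$-categories between $\mathcal{C}$ and $\mathbf{N}_\Delta(\overline{\mathcal{C}}^{**})$, so that morphisms in $\mathcal{C}$ transport, up to coherent homotopy, to $\mathbf{SSet}^{\mathfrak{C}(\mathcal{C}^{op})}$ via $\overline{y}$; (ii) the fact that the Yoneda embedding preserves limits and that dependent products along fibrations in $\overline{\mathcal{C}}^{**}$ compute the derived dependent products, so that the polynomial functor built from $h^*$, $f_*$, $g_!$ in $\mathcal{C}$ is taken by $\overline{y}$ to an equivalent functor in $\overline{\mathcal{C}}^{**}$; and (iii) the two factorization systems used to rectify homotopies into strict commutativities.

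For the underlying object, let $w \to \gamma$ denote the initial $p$-algebra in $\mathcal{C}_{/\gamma}$, with structure map $\sigma : p(w) \to w$. Form the composite $\overline{y}(w) \to \overline{y}(\gamma) \to \Gamma$ and take its (trivial cofibration, fibration) factorization to produce $W \to \Gamma$ in $\overline{\mathcal{C}}^{**}$; this is essentially representable by construction. Evaluating the corresponding polynomial in $\overline{\mathcal{C}}^{**}$ at $W$ yields an object equivalent to $\overline{y}(p(w))$, and the morphism $\sigma$ translates to a morphism $p(W) \to W$ over $\Gamma$ up to homotopy. Running the two-step lifting argument of Proposition \ref{sumtype} (first rectifying the target using $p_W$, then lifting against $\mathbf{Path}(W) \to W$) produces a strict constructor $\mathrm{sup} : p(W) \to W$ over $\Gamma$ that represents the homotopy algebra structure.

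For the eliminator, suppose we are given a fibration $C \to W$ (equivalent to $\overline{y}(c) \to \overline{y}(w)$ for some $c \to w$ in $\mathcal{C}$) together with a method morphism $d$ encoding a dependent algebra structure over $\mathrm{sup}$. Using the locally cartesian closed structure of $\mathcal{C}$, assemble $c$ and the transported method into a $p$-algebra $\tilde c \to \gamma$ that sits over $w$ and projects back to $c \to w$. By initiality of $w$, the unique algebra morphism $w \to \tilde c$ yields, after projection, a section of $c \to w$ in $\mathcal{C}$ compatible with $\sigma$ up to coherent homotopy. Transporting this section along $\overline{y}$ and applying the same two-step lifting pattern as in Proposition \ref{sumtype} (first against $p_C$ using a homotopy to $id_W$, then against $\mathbf{Path}(C) \to C$) produces a strict section $\mathrm{rec}: W \to C$ together with a path witnessing $\mathrm{rec} \circ \mathrm{sup} \sim d \circ p(\mathrm{rec})$, i.e.\ the (propositional) computation rule expected in the setting of \cite{ls2020}.

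The main obstacle I expect is in Step 3. Initiality of $w$ directly supplies only the recursor (a non-dependent fold), whereas the type-theoretic elimination rule is dependent in the motive $C$. In a $1$-categorical locally cartesian closed setting this gap is bridged by constructing an auxiliary $p$-algebra $\tilde c$ out of the motive and the method, and extracting the induction principle from the resulting algebra morphism; here, this construction must be carried out in the quasicategory $\mathcal{C}$ and then transported, through the equivalences and the rectification steps, to $\overline{\mathcal{C}}^{**}$. The nontrivial verification is that the homotopy coherences between $\tilde c$, its projection onto $c$, and the structure map $\sigma$ line up well enough for the two-step rectification to produce a strict section satisfying the computation rule; the remaining steps are straightforward adaptations of Proposition \ref{sumtype}.
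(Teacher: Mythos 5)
Your overall route matches the paper's: transport the initial $p$-algebra $w$ along $\overline{y}$, fibrantly replace over $\Gamma$ to get $W$, obtain the eliminator from initiality in $\mathcal{C}$ applied to the total object of the motive, and then rectify by the two lifting steps of Proposition \ref{sumtype}. But there is a concrete gap in how you build $W$, and it undermines your final step. The paper first factors the algebra structure map $P\overline{y}(x) \to \overline{y}(x)$ as a cofibration followed by a trivial fibration (producing $W_0$), and only then factors $W_0 \to \Gamma$ as a trivial cofibration followed by a fibration. The point of this order is that the canonical map $\overline{y}(px) \to W$ is then a cofibration, and this cofibrancy is exactly what makes the last lifting problem --- lifting a homotopy $\overline{y}(px) \to \mathbf{Path}(C)$ against the trivial fibration $\mathbf{Path}(C) \to C$ --- solvable, which is how the computation rule is made to hold strictly (just as $\text{case} \circ (\text{inl},\text{inr}) = d$ holds on the nose in the sum-type case). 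In your construction you only factor $\overline{y}(w) \to \Gamma$, so your constructor $\mathrm{sup}: p(W) \to W$ is merely some map produced by rectification, with no reason to be a cofibration; the lifting ``against $\mathbf{Path}(C) \to C$'' that you invoke is therefore not justified, and indeed you end up claiming only a propositional computation rule, which is weaker than what the proposition (read in line with the paper's sum-type construction and the intended interpretation of $W$-types) should deliver.

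Two smaller points. First, your preliminary rectification of the constructor via a lift against $\mathbf{Path}(W) \to W$ is both unnecessary and unmotivated: no strict equation is required of $\mathrm{sup}$ at that stage (the paper simply defines $\mathbf{fold}$ as the composite $PW \to \overline{y}(px) \to W$); what is actually needed of the constructor is cofibrancy of its source map into $W$, which your factorization scheme does not provide. Second, your ``auxiliary algebra $\tilde c$'' for dependent elimination is essentially what the paper does already, since $C$ is interpreted as the context extension $\Gamma, W, C$, so the corresponding object $c$ of $\mathcal{C}$ is the total object and the given square directly endows it with a $p$-algebra structure over which initiality of $px \to x$ yields the (up-to-homotopy) section; no separate construction in the locally cartesian closed structure is needed there, and flagging it as the main obstacle misidentifies where the real difficulty lies, namely securing the cofibration needed for the strict computation rule.
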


\begin{proof}

 Consider the initial algebra $px \to x$ in $\mathcal{C}$.
 Since $H^*$ corresponds to $h^*$ up to equivalence (in the sense that $H^* \overline{y}(x) \simeq \overline{y}(h^*x)$), and similarly for $F_*$, it is clear that $\overline{y}(x)$ can be made into a $P$-algebra $l: P\overline{y}(x) \to \overline{y}(x)$, where $P$ is the following polynomial endofunctor corresponding to $p$:

\[
\adjustbox{scale=0.9}{%
\begin{tikzcd}
	{\mathbf{SSet}^{\mathfrak{C}(\mathcal{C}^{op})}_{/\Gamma}} && {\mathbf{SSet}^{\mathfrak{C}(\mathcal{C}^{op})}_{/A}} && {\mathbf{SSet}^{\mathfrak{C}(\mathcal{C}^{op})}_{/B}} && {\mathbf{SSet}^{\mathfrak{C}(\mathcal{C}^{op})}_{/\Gamma}}
	\arrow["{H^*}", from=1-1, to=1-3]
	\arrow["{F_*}", from=1-3, to=1-5]
	\arrow["{G_!}", from=1-5, to=1-7]
\end{tikzcd}}\]

Proceeding as in \cite{ls2020}, write $\mathbb{T}_P$ for the algebraically-free monad on $P$, and $\mathbb{R}_\Gamma$ for the fibrant replacement monad (over $\Gamma$). Consider the initial $\mathbb{T}_P + \mathbb{R}_\Gamma$ algebra $X$. In particular, we have a $P$-algebra map $X \to \overline{y}(x)$. By \cref{alg_ms}, we know that $\mathbf{Alg}_{\mathbb{T}_P + \mathbb{R}_\Gamma}$ admits a right transferred model structure (from the forgetful functor).

Now form the (cofibration,trivial fibration)-factorization as in the diagram below in $\mathbf{Alg}_{\mathbb{T}_P + \mathbb{R}_\Gamma}$:

\[\begin{tikzcd}[ampersand replacement=\&]
	X \&\& W \&\& {\overline{y}(x)}
	\arrow[hook, from=1-1, to=1-3]
	\arrow["\sim"{description}, two heads, from=1-3, to=1-5]
\end{tikzcd}\]

We claim that $W$ is the expected $W$-type. The $\mathbf{Alg}_{\mathbb{T}_P + \mathbb{R}_\Gamma}$-algebra structure on $W$ yields the map $\mathbf{fold}: \Gamma, A, \Pi_B W[p_A] (= PW) \to \Gamma, W$ (over $\Gamma$). For the elimination rule, consider a type $C$ in context $\Gamma, W$ (and an equivalence $y(c) \to C$), and a square of the following form:

\[\begin{tikzcd}
	{\Gamma, A, \Pi_BW, \Pi_BC (=P\Sigma_WC)} && {\Gamma, A, \Pi_BW} \\
	\\
	{ \Gamma, W, C} && {\Gamma,W}
	\arrow["{\mathbf{fold}}", from=1-3, to=3-3]
	\arrow[from=3-1, to=3-3]
	\arrow[from=1-1, to=1-3]
	\arrow["D"', from=1-1, to=3-1]
\end{tikzcd}\]

There is a corresponding commutative square in $\mathcal{C}$:

\[\begin{tikzcd}
	pc && px \\
	\\
	c && x
	\arrow[from=1-3, to=3-3]
	\arrow[from=3-1, to=3-3]
	\arrow[from=1-1, to=1-3]
	\arrow["d"', from=1-1, to=3-1]
\end{tikzcd}\]

\noindent
By initiality of $px \to x$, we get a square (over $x$) as below,

\[\begin{tikzcd}
	px && pc \\
	\\
	x && c
	\arrow[from=1-1, to=3-1]
	\arrow["{\mathit{wrec}}"', from=3-1, to=3-3]
	\arrow[from=1-1, to=1-3]
	\arrow["d"', from=1-3, to=3-3]
\end{tikzcd}\]

\noindent
where $\mathit{wrec}$ is section of $c \to x$ in the quasicategory of algebras. This is also a section in the quasicategory of algebra of the $\infty$-monad $p'$ on $\mathcal{S}^{\mathcal{C}^{op}}$ defined from the representable maps associated to $h$, $f$, and $g$.

The corresponding map $\mathbf{wrec}_0$ is only a section of $p_C : C \to W$ up to homotopy (that is, it is a section in homotopy category $\mathbf{Alg}_{\mathbb{T}_P + \mathbb{R}_\Gamma}$), but we can replace it with a homotopic map that is a real section. To do that, working with the model structure over $\mathbf{SSet}^{\mathfrak{C}(\mathcal{C}^{op})}$, we consider a homotopy $H_0$ from $p_C \circ \mathbf{wrec}_0$ to $id_W$ (noting that $p_C$ is fibration in $\mathbf{Alg}_{\mathbb{T}_P + \mathbb{R}_\Gamma}$ because the notion of fibration there is transferred from the forgetful functor $\mathbf{Alg}_{\mathbb{T}_P + \mathbb{R}_\Gamma} \to \mathbf{SSet}^{\mathfrak{C}(\mathcal{C}^{op})}$). The second component of the lift in the following diagram gives the section $\mathbf{wrec}$.

\[\begin{tikzcd}[ampersand replacement=\&]
	W \&\& C \\
	\\
	{\mathbf{Cyl}(W)} \&\& W
	\arrow["{\mathbf{wrec}_0}", from=1-1, to=1-3]
	\arrow["\sim"{description}, hook, from=1-1, to=3-1]
	\arrow["{p_C}", two heads, from=1-3, to=3-3]
	\arrow["{K_0}", dashed, from=3-1, to=1-3]
	\arrow["{H_0}"', from=3-1, to=3-3]
\end{tikzcd}\]
\end{proof}

\begin{remark}
 Given a concrete description of the associated syntax, we believe that any cell monad with parameters (in the sense of \cite{ls2020}), inside of $\mathcal{C}$, admitting an initial algebra, yields a corresponding higher inductive type in $\mathbf{SSet}^{\mathfrak{C}(\mathcal{C}^{op})}$ using the previous proof pattern. However, we do not further investigate this question here.
\end{remark}

\section{Elementary higher topoi}

The following definition for the notion of elementary higher topos was originally proposed by Shulman (\cite{ncafe}), and further studied by Rasekh (see \cite{rasekh2018eth}):

\begin{definition}
 \label{eht}
 An elementary higher topos is a quasicategory $\mathcal{E}$ such that 
 
 \begin{enumerate}
  \item $\mathcal{E}$ is finitely complete and cocomplete.
  \item $\mathcal{E}$ is locally cartesian closed.
  \item $\mathcal{E}$ admits a subobject classifier.
  \item For every morphism $f: x \to y$ in $\mathcal{E}$, there exists an object classifier $p: V \to U$ classifying $f$, such that the class of morphisms it classifies is closed under composition, fiberwise finite limits and colimits, as well as dependent products.
 \end{enumerate}

\end{definition}

At this point, we have studied how the properties 1, 2, and 4 in Definition \ref{eht} can be used to provide features of the type theory we construct from the quasicategory $\mathcal{C}$ (when taking $\mathcal{C} := \mathcal{E}$). Namely, we have that :
\begin{itemize}
 \item Pullbacks provide substitutions (Section \ref{s2}), the terminal object plays the role of the empty context, and pushout give the so-called "pushout types" (Section \ref{s4}).
 \item Local cartesian closure yields $\Pi$-types (Section \ref{s2}).
 \item Object classifiers, or rather, equivalently, univalent maps (see Theorem 3.32 of \cite{rasekh2018eth}), provide univalent universes (Section \ref{s3}). 
\end{itemize}

We now consider property 3, that is, the existence of a subobject classifier.
The link between such an object and the propositional resizing axiom is known (this is notably discussed in Section 8 of \cite{rasekh2018pr}).

We will use the following definition:

\begin{definition}
\label{propres}
 A model of type theory (e.g. a tribe) satisfies propositional resizing if there exists a fibration $s : \tilde{V} \to V$, classifying the fibrations that are $(-1)$-truncated (the latter notion being defined representably from the corresponding notion on spaces, or equivalently from contractibility of its fibers, in the usual homotopy type-theoretic sense).
\end{definition}

The following proposition follows from Section \ref{s3} of the present chapter, in the special case where the class $\mathbf{S}$ of morphisms classified by the universe is the class of fibrations that are homotopy monomorphisms ($(-1)$-truncated maps).

\begin{proposition}
 Suppose $\mathcal{C}$ has a subobject classifier. Then there is an essentially representable object of $\mathbf{SSet}^{\mathfrak{C}(\mathcal{C}^{op})}$ that witnesses propositional resizing in the sense of Definition \ref{propres}.
\end{proposition}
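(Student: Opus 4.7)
The plan is to apply the construction from Section 3 directly, taking the subobject classifier in $\mathcal{C}$ as the univalent map playing the role of $\pi : \tilde{\mathbf{u}} \to \mathbf{u}$. In a quasicategory, a subobject classifier is equivalently a univalent map $\top : 1 \to \Omega$ whose classified class $\mathbf{S}$ consists of the $(-1)$-truncated morphisms, i.e.\ the homotopy monomorphisms (see the reference to Rasekh cited in the paper). So the first thing to verify is that this class is pullback-stable, which is immediate: $(-1)$-truncation is a property of the diagonal being an equivalence, and diagonals are preserved by pullback.

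Once these hypotheses are checked, the Proposition of Section 3 (together with the preceding construction of $\overline{\mathcal{C}}^{**}$) produces a univalent universe $\mathscr{U}$ in $\mathbf{SSet}^{\mathfrak{C}(\mathcal{C}^{op})}$ classifying precisely those fibrations (between suitably small objects of $\overline{\mathcal{C}}^{**}$) that are equivalent to a morphism corresponding to an edge in $\mathbf{S}$, and moreover $\mathscr{U}$ is essentially representable, being equivalent to the image under $\overline{y}$ of the subobject classifier $\Omega$ of $\mathcal{C}$.

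The remaining step is to identify the class of fibrations so classified with the $(-1)$-truncated fibrations demanded by Definition \ref{propres}. Since the simplicial nerve of $\overline{\mathcal{C}}$ is equivalent as an $(\infty,1)$-category to $\mathcal{C}$, and all the objects of $\overline{\mathcal{C}}^{**}$ are equivalent to objects of $\overline{\mathcal{C}}$, a fibration in $\overline{\mathcal{C}}^{**}$ is $(-1)$-truncated in the homotopical sense exactly when its image in $\mathcal{C}$ under the equivalence is a monomorphism (i.e.\ lies in $\mathbf{S}$). This is a purely formal consequence of the fact that truncation levels are a homotopy-invariant notion preserved by equivalences of $(\infty,1)$-categories, applied to the zig-zag of equivalences set up in Section 1. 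Thus $\mathscr{U}$ classifies precisely the $(-1)$-truncated fibrations, giving the required witness of propositional resizing.

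The main subtlety, and really the only nontrivial point, is making sure that the identification between homotopy monomorphisms in $\mathcal{C}$ and $(-1)$-truncated fibrations in $\overline{\mathcal{C}}^{**}$ is tight enough to yield a \emph{classifying} fibration rather than merely a universe for a related but distinct class. This is handled by observing that the construction of $\mathscr{U}$ in Section 3 classifies morphisms up to equivalence, and both $(-1)$-truncation on the simplicial side and monomorphism-hood on the quasicategorical side are closed under equivalence, so the two classes agree on the nose under the correspondence.
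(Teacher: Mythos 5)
Your proposal is correct and follows exactly the route the paper takes: the paper simply invokes the Section 3 construction in the special case where $\mathbf{S}$ is the class of homotopy monomorphisms ($(-1)$-truncated maps) classified by the subobject classifier, which is precisely your argument, with the pullback-stability check and the identification of the classified fibrations with the $(-1)$-truncated ones filled in as routine details.
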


Observing that a family of univalent universes inside $\mathcal{C}$ yields a family of univalent universes in the $\pi$-tribe $\overline{\mathcal{C}}^{**}$ (as in Section 3), Theorem \ref{main} has now the following corollary:

\begin{theorem}
 Every elementary higher topos provides a model for homotopy type theory.
\end{theorem}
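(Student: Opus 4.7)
The plan is to apply the machinery developed in the preceding sections to an elementary higher topos $\mathcal{E}$, taking $\mathcal{C} := \mathcal{E}$. Each of the four defining properties of Definition \ref{eht} supplies exactly the structure needed for one component of homotopy type theory, so the argument is essentially an assembly of what has already been established rather than a new construction.

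First, properties 1 and 2 (finite limits together with local cartesian closure) place us in the setting of Section 2. By Proposition \ref{strict_struct} and the theorem at its end, the rigidified simplicial category $\overline{\mathcal{E}}^*$ is a simplicial $\pi$-tribe, and thereby provides a comprehension category modelling MLTT with $\Sigma$-types, $\Pi$-types and intensional identity types. Second, property 1 also provides finite colimits, so the results of Section 4 apply and we add coproduct types and pushout types (as in Proposition \ref{sumtype} and its pushout analogue), and more generally any $W$-type whose associated polynomial endofunctor has an initial algebra in $\mathcal{E}$.

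Third, property 4 is precisely the hypothesis required by Section 3. Each object classifier $\pi : \tilde{\mathbf{u}} \to \mathbf{u}$ of $\mathcal{E}$ is univalent (in the sense already recalled, see Theorem 3.32 of \cite{rasekh2018eth}), and so the construction of that section promotes it to an essentially representable univalent universe $\mathscr{U}$ in $\overline{\mathcal{E}}^{**}$ via Theorem \ref{main}. The closure of the class of $\pi$-classified morphisms in $\mathcal{E}$ under composition, fibrewise finite limits and dependent product transfers, through essential representability, to closure of $\mathscr{U}$ under the corresponding type-theoretic operations; this is what is needed for $\mathscr{U}$ to be a universe of HoTT and not merely a univalent fibration. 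Fourth, property 3 is exactly the input of the proposition immediately preceding the theorem, which gives propositional resizing in the sense of Definition \ref{propres}.

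Putting all these together, the smallest full subcategory of $\mathbf{SSet}^{\mathfrak{C}(\mathcal{E}^{op})}$ containing $\overline{\mathcal{E}}^{**}$ and stable under the strictified operations corresponding to items above is a comprehension category supporting $\Sigma$-types, $\Pi$-types, intensional identity types, finite coproducts, pushouts, a univalent universe closed under type formers, and propositional resizing. The main obstacle in making this rigorous is not any of the individual interpretations but their simultaneous coherent presence: the new universes, the added (higher) inductive types and iterated closures under pullback and dependent product must all live inside a single small subcategory of fibrant--cofibrant objects. As discussed in Remark \ref{size_issue}, using algebraic weak factorisation systems provides the needed uniform presentability bound, and the iterative closure construction of Definition \ref{cstar} stabilises at $\omega$, so the resulting model genuinely lies within the ambient framework and witnesses HoTT with (one or a tower of) univalent universes.
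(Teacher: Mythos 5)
Your proposal is correct and follows essentially the same route as the paper: the theorem is obtained by assembling the results of Sections 2--4 (rigidification with $\Sigma$-, $\Pi$- and identity types, pushout/coproduct types from colimits, univalent universes from object classifiers via Theorem \ref{main}) together with the propositional-resizing proposition for the subobject classifier, with the size caveat handled as in Remark \ref{size_issue}. Your added remarks on closure of the universe under type formers and on simultaneous coherence are elaborations the paper leaves implicit, but they do not change the argument.
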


\begin{remark}
 Applying repeatedly the argument of Section \ref{s3} still requires strong set-theoretic assumptions (at least countably many inaccessible cardinals). This is the usual requirement, and it cannot be avoided (in the sense that the model of homotopy type theory constructed this way yields back a model of ZFC + "there exist infinitely many inaccessible cardinals").
\end{remark}

\begin{example}
 In his paper \cite{anel2021}, Anel observes that the quasicategory $\mathcal{S}^{< \infty}_{\text{coh}}$ of truncated coherent spaces has properties that ought to make it an example of elementary higher topos, although it does not admit all finite colimits, and its universes are not closed under type constructors (dependent sum and product). In particular, it does not satisfy the axioms of Definition \ref{eht}, and we cannot apply directly the previous theorem (nor can we apply Theorem 11.2 from \cite{shulman2019}). Our work from Section \ref{s3}-\ref{s4} shows that, nonetheless, the quasicategory $\mathcal{S}^{< \infty}_{\text{coh}}$ provides an interpretation for a homotopy type theory that lacks some higher inductive types, and whose universes are not closed under type constructors.
\end{example}

\newpage

\section*{Appendix - Some details on the interpretation of MLTT}
\addcontentsline{toc}{section}{Appendix - Some details on the interpretation of MLTT}

In this appendix, we quickly recall the usual interpretation of the core of MLTT in a (full subcategory of a) suitable model category, or, more generally, in simplicial $\pi$-tribe $\mathcal{T}$.

\begin{definition}[Interpretation]

We interpret the core of MLTT as follows:
 
 \begin{itemize}
  \item Each context $\Gamma$ is interpreted by a an object of $\mathcal{T}$ noted $\llbracket \Gamma \rrbracket$.
  \item The empty context is interpreted by the terminal object $id_\mathcal{C}: \mathcal{C} \to \mathcal{C}$
  \item Each substitution $\Gamma \vdash s: \Delta$ is interpreted by a morphism $\llbracket \Gamma \rrbracket \to \llbracket \Delta \rrbracket$.
  \item Each dependent type $\Gamma \vdash A$ is interpreted by a fibration $p_A: \llbracket \Gamma, A \rrbracket \to \llbracket \Gamma \rrbracket$.
  \item Each term $\Gamma \vdash a: A$ is interpreted by a section of $p_A$.
  
 \end{itemize}

\end{definition}

\begin{proposition}[Substitution]
 Substitution can be modeled by pullback: if $\Gamma \vdash s: \Delta$ is a substitution and $\Delta \vdash A$ is a type in context $\Delta$, there is a substituted type $\Gamma \vdash s^* A$
\end{proposition}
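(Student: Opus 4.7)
The plan is to interpret the substitution by pullback along the morphism interpreting $s$, then observe that fibrations are stable under such pullbacks in the ambient $\pi$-tribe, and finally invoke the splitting machinery already alluded to in the paper to handle coherence.

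Concretely, I would proceed as follows. First, unpack the data: by the interpretation, the substitution $\Gamma \vdash s : \Delta$ is given as a morphism $\llbracket s \rrbracket : \llbracket \Gamma \rrbracket \to \llbracket \Delta \rrbracket$ in $\mathcal{T}$, and the type $\Delta \vdash A$ is given as a fibration $p_A : \llbracket \Delta, A \rrbracket \twoheadrightarrow \llbracket \Delta \rrbracket$. Since $\mathcal{T}$ is a simplicial $\pi$-tribe, and in particular the construction of $\overline{\mathcal{C}}^*$ in Definition \ref{cstar} explicitly closes under pullbacks along fibrations, I can form the pullback square
\[
\begin{tikzcd}
\llbracket \Gamma \rrbracket \times_{\llbracket \Delta \rrbracket} \llbracket \Delta, A \rrbracket \ar[r] \ar[d, two heads] & \llbracket \Delta, A \rrbracket \ar[d, two heads, "p_A"] \\
\llbracket \Gamma \rrbracket \ar[r, "\llbracket s \rrbracket"'] & \llbracket \Delta \rrbracket
\end{tikzcd}
\]
and declare $\llbracket \Gamma, s^*A \rrbracket$ to be the top-left corner, with $p_{s^*A}$ the left vertical arrow.

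The second step is to check this really yields the interpretation of a type in context $\Gamma$, that is, that the projection $p_{s^*A}$ is again a fibration. This is immediate from the axioms of a tribe (equivalently, a model category): fibrations are stable under pullback. The sections of $p_{s^*A}$ then correctly model terms of type $s^*A$, and the universal property of the pullback gives the expected correspondence between terms $\Gamma \vdash a : s^*A$ and pairs consisting of $\llbracket s \rrbracket$ together with a term $\Gamma \vdash \tilde a : A$ along $s$, matching the syntactic rules for substitution of terms.

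The main obstacle, and the only subtle point, is strict functoriality of substitution: type theory demands $id^* A = A$ and $(s \circ t)^* A = t^*(s^* A)$ as definitional equalities, whereas pullbacks in $\mathcal{T}$ are only determined up to canonical isomorphism. This is precisely the classical coherence problem in categorical semantics, and, as anticipated in the remark following Proposition \ref{strict_struct}, I would not address it here by hand but invoke the left-splitting construction of \cite{lw2015local}, which applied to the comprehension category $(\overline{\mathcal{C}}^*)^\rightarrow_{\text{fib}} \to \overline{\mathcal{C}}^*$ produces a strictly split replacement modeling substitution on the nose. With that splitting in place, the pullback above provides a well-defined interpretation of $s^*A$ that is strictly functorial in $s$, completing the proof.
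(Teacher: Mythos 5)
Your proposal is correct and follows essentially the same route as the paper: interpret $s^*A$ as the pullback of the fibration $p_A$ along $\llbracket s \rrbracket$, use stability of fibrations under pullback, and obtain terms via the universal property. Your additional remark on strict functoriality is handled in the paper not inside this proof but by the blanket convention of applying the left-splitting of \cite{lw2015local} to the comprehension category, exactly as you suggest.
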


\begin{proof}
 Since fibrations are stable under pullback, pulling back $s$ along $p_A: \llbracket \Delta, A \rrbracket \to \llbracket \Delta \rrbracket$ yields a fibration $p'_A: \llbracket \Gamma, s^*A \rrbracket \to \llbracket \Gamma \rrbracket$. Given a term $\Delta \vdash a: A$, the universal property of the pullback provides a term $a'$ of type $s^*A$.
\end{proof}

\begin{proposition}[Dependent sum]
 Given types $\Gamma \vdash A$ (associated to a projection $p_A$) and $\Gamma, A \vdash B$ (associated to a projection $p_B$), there is a sum-type $\Gamma \vdash \Sigma_A B$ provided by the composite $p_{\Sigma_A B} = p_A \circ p_B$.
\end{proposition}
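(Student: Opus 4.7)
The plan is to verify three things in order: that $p_A \circ p_B$ is a well-defined fibration, that the pairing (introduction) rule is modeled correctly, and that the dependent elimination and computation rules hold. The first point is immediate from the tribe axioms, since fibrations in a simplicial $\pi$-tribe are closed under composition; so $\Sigma_A B$ is indeed a bona fide type in context $\Gamma$, and the object $\llbracket \Gamma, \Sigma_A B \rrbracket$ can be taken to be $\llbracket \Gamma, A, B \rrbracket$ itself.

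For the introduction rule, suppose terms $\Gamma \vdash a : A$ and $\Gamma \vdash b : B\{a/x\}$ are interpreted by a section $\sigma_a$ of $p_A$ and a section $\sigma_b$ of the pullback $\sigma_a^* p_B$. By the universal property of this pullback (which defines the substituted type $\Gamma \vdash B\{a/x\}$ as per the previous proposition), $\sigma_b$ corresponds uniquely to a morphism $\llbracket \Gamma \rrbracket \to \llbracket \Gamma, A, B \rrbracket$ whose composite with $p_B$ is $\sigma_a$ and hence whose composite with $p_A \circ p_B$ is the identity on $\llbracket \Gamma \rrbracket$. This is the desired interpretation of the pair $\langle a, b \rangle$ as a section of $p_{\Sigma_A B}$.

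For the elimination rule, given a type $\Gamma, \Sigma_A B \vdash C$ and a term $\Gamma, A, B \vdash c : C\{\langle x, y \rangle / z\}$, I would use the fact that, under the identification $\llbracket \Gamma, \Sigma_A B \rrbracket = \llbracket \Gamma, A, B \rrbracket$, the substitution along the pair is modeled by the identity morphism of this object. The interpretation of $c$ is then already a section of $p_C$ over $\llbracket \Gamma, A, B \rrbracket = \llbracket \Gamma, \Sigma_A B \rrbracket$, and one reads off the eliminator directly; the $\Sigma$-computation rule holds on the nose by this identification.

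The only potential subtlety is strictness: ensuring that the chosen pullbacks, and hence the interpretation of substitution, compose strictly enough for the rules to hold as equalities rather than up to isomorphism. This is precisely the point addressed by the splitting procedure invoked in the earlier remark on coherence, so no additional work is needed here.
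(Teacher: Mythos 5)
Your proposal is correct and follows essentially the same route as the paper: fibrations are closed under composition, the context $\Gamma, \Sigma_A B$ is identified with $\Gamma, A, B$, pairing is obtained from a section of the composite via the pullback's universal property, and elimination with its strict computation rule falls out because substitution along the pair map is the identity. The only cosmetic difference is that the paper records stability under substitution via the Beck--Chevalley condition for $f_! \dashv f^*$, whereas you fold that point into the splitting/coherence remark, which amounts to the same bookkeeping.
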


\begin{proof}
 The composite of two fibrations indeed yields a fibration. By construction $\Gamma, A, B$ is the same as the context $\Gamma, \Sigma_A B$. The identity morphism associated to the object $\llbracket \Gamma, A, B \rrbracket$ yields the substitution $\Gamma, A, B \vdash pair_{A,B}: \Sigma_A B$.
 
 Given a term $\Gamma \vdash a: A$ (given by a section $s_A$ of $p_A$) and a term $\Gamma \vdash B[a]$ (as section $s_B$ of $p_B$); $s_B \circ s_A$ gives a section of $p_{\Sigma_A B}$ which is the term $(a,b)$.
 For elimination, suppose given a type $\Gamma, \Sigma_A B \vdash C$ (corresponding to the projection $p_C$) and a term $\Gamma, x: A, y: B \vdash d(x,y): C$ (as a section of the pullback of $p_C$ along $pair_{A,B}$). This gives immediately a term $\Gamma, z: \Sigma_A B \vdash d(z): C$ which is the exact same section (the pullback is trivial).
 
 The compatibility with substitution stems from the Beck-Chevalley condition satisfied by the adjunctions of the form $f_! \dashv f^*$. 
\end{proof}

\begin{proposition}[Dependent product]

 Given types $\Gamma \vdash A$ (associated to a projection $p_A$) and $\Gamma, A \vdash B$ (associated to a projection $p_B$), there is a product-type $\Gamma \vdash \Pi_A B$ provided by the dependent product $\Pi_{p_A} p_B$.
 
\end{proposition}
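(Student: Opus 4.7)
The plan is to verify the four usual rules of $\Pi$-types (formation, introduction, elimination, computation) together with stability under substitution, all of which follow from the adjunction $p_A^{*} \dashv \Pi_{p_A}$ together with the fact that $\Pi_{p_A}$ is a right Quillen functor on the slice categories of a simplicial $\pi$-tribe.

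First, for formation, I would note that by the definition of a $\pi$-tribe, the right adjoint $\Pi_{p_A}$ to pullback along a fibration $p_A$ preserves fibrations; hence $p_{\Pi_A B} := \Pi_{p_A}(p_B) : \llbracket \Gamma, \Pi_A B \rrbracket \to \llbracket \Gamma \rrbracket$ is itself a fibration, and we take it as the interpretation of the type $\Gamma \vdash \Pi_A B$. For introduction, sections of $\Pi_{p_A}(p_B)$ over $\llbracket \Gamma \rrbracket$ correspond, under the hom-set bijection of the adjunction $p_A^{*} \dashv \Pi_{p_A}$, to sections of $p_A^{*}(\Pi_{p_A}(p_B))$ over $\llbracket \Gamma, A \rrbracket$; composing with the counit $p_A^{*} \Pi_{p_A} p_B \to p_B$ produces sections of $p_B$, and this composite is a natural bijection. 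A term $\Gamma, A \vdash b : B$ (a section of $p_B$) thus transposes to a term $\Gamma \vdash \lambda b : \Pi_A B$.

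For elimination, given $\Gamma \vdash f : \Pi_A B$ (a section $s_f$ of $p_{\Pi_A B}$) and $\Gamma \vdash a : A$ (a section $s_a$ of $p_A$), I would pull $s_f$ back along $p_A$ to obtain a section of $p_A^{*}(\Pi_{p_A} p_B)$ over $\llbracket \Gamma, A \rrbracket$, then apply the counit to land in a section of $p_B$, and finally precompose with $s_a$ to obtain a section of the substituted fibration $a^{*} p_B$, i.e.\ a term $\Gamma \vdash f(a) : B[a]$. The $\beta$-computation rule $(\lambda b)(a) = b[a]$ is then a direct consequence of one of the triangular identities of the adjunction: the composite of the unit followed by the counit on $p_B$ is the identity, so evaluating an abstracted term at $a$ recovers the original section precomposed with $s_a$.

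The remaining and most delicate point is stability under substitution: given $\Gamma' \vdash s : \Gamma$, one needs $s^{*}(\Pi_A B) = \Pi_{s^{*}A}(s^{*}B)$ on the nose, and similarly for abstraction and application. Semantically this is the Beck--Chevalley condition for the adjunctions $f^{*} \dashv \Pi_f$ along pullback squares, which holds only up to canonical isomorphism in $\overline{\mathcal{C}}^{*}$. This is exactly the coherence problem flagged in the earlier remark about substitution: strict stability is recovered by passing from the non-split comprehension category $(\overline{\mathcal{C}}^{*})^{\rightarrow}_{\text{fib}} \to \overline{\mathcal{C}}^{*}$ to its left-split version as in \cite{lw2015local}, which turns the Beck--Chevalley isomorphisms into strict equalities while preserving $\Pi$-structure. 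Modulo this splitting, which we have agreed to apply implicitly, the verification of stability reduces to checking that $\Pi$ commutes with pullback up to the chosen coherent isomorphisms, and this is where I would expect the bulk of any detailed argument to lie.
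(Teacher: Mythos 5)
Your proposal is correct and follows essentially the same route as the paper's proof: $\Pi_{p_A}p_B$ is a fibration, terms of $\Pi_A B$ arise by transposition along $p_A^* \dashv \Pi_{p_A}$, application is the transpose composed with the section interpreting $a$, and substitution stability is handled by the Beck--Chevalley condition together with the (implicitly applied) left-splitting of \cite{lw2015local}. You simply spell out details the paper leaves terse (the counit description of transposition, the triangle identity giving the $\beta$-rule), which is a faithful elaboration rather than a different argument.
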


\begin{proof}
 The dependent product $p_{\Pi_A B} = \Pi_{p_A} p_B$ is still a fibration. By transposition, a term $\Gamma, a: A \vdash b: B$ given as section $s_B$ of $p_B$ is equivalent tot the data of a section $s_\Pi$ of $p_{\Pi_A B}$.
 
 Explicitly, given a term $\Gamma \vdash f: \Pi_A B$ given as a section $s_\Pi$, and a term $\Gamma \vdash a: A$ (a section $s_A$). The applied term $\Gamma, a: A \vdash f(a): B$ is interpreted by the composite of $s_A$ and of the transpose $s_\Pi'$ of $s_\Pi$. In particular $p_B \circ s_\Pi' = s_A$.
 
 Compatibility with substitution is as before.
 \end{proof}

\begin{proposition}[Identity types]
  Given types $\Gamma \vdash A$ (associated to a projection $p_A$), there is a identity-type $\Gamma, A, p_A^*A \vdash Id_A$ provided by the path object $\mathbf{Path}(p_A)$ (obtained by factoring the dependent diagonal $A \to \Delta_{p_A} A$ as $(i_0,i_1) \circ \mathbf{refl}_A$).
\end{proposition}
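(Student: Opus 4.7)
The plan is to follow the standard path object construction of intensional identity types in a model structure, adapted to the tribe setting. First, I would interpret the context $\Gamma, A, p_A^* A$ as the pullback $\llbracket \Gamma, A \rrbracket \times_{\llbracket \Gamma \rrbracket} \llbracket \Gamma, A \rrbracket$, and note that its structural projection down to $\llbracket \Gamma \rrbracket$ is indeed a fibration since it is a composite of two fibrations obtained by pulling back $p_A$ along itself. Write $\Delta_{p_A}\llbracket \Gamma, A \rrbracket$ for this object and $\delta_A : \llbracket \Gamma, A \rrbracket \to \Delta_{p_A}\llbracket \Gamma, A \rrbracket$ for the dependent diagonal.

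Next, I would apply the (trivial cofibration, fibration) factorization to $\delta_A$ over $\llbracket \Gamma \rrbracket$, obtaining the factorization $\delta_A = (i_0, i_1) \circ \mathbf{refl}_A$ with $\mathbf{refl}_A : \llbracket \Gamma, A \rrbracket \to \mathbf{Path}(p_A)$ a trivial cofibration and $(i_0, i_1) : \mathbf{Path}(p_A) \to \Delta_{p_A}\llbracket \Gamma, A \rrbracket$ a fibration. The fibration $(i_0, i_1)$ is then taken as the interpretation of the identity type $\Gamma, A, p_A^* A \vdash Id_A$. Since $\mathbf{refl}_A$ sits over $\delta_A$, it presents $\mathbf{refl}$ as a section of the pullback of $(i_0, i_1)$ along $\delta_A$, i.e.\ as the reflexivity term $\Gamma, A \vdash \mathbf{refl} : \delta_A^* Id_A$.

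For $J$-elimination, I would proceed by lifting. Given a type $\Gamma, x{:}A, y{:}A, p{:}Id_A \vdash C$ interpreted by a fibration $p_C : \llbracket -, C \rrbracket \to \mathbf{Path}(p_A)$ and a term $d$ interpreted by a section of $\mathbf{refl}_A^* p_C$, the data $(d, \mathrm{id}_{\mathbf{Path}(p_A)})$ assembles into a commutative square whose left edge is $\mathbf{refl}_A$ and whose right edge is $p_C$. Since $\mathbf{refl}_A$ is a trivial cofibration and $p_C$ a fibration, a lift exists and provides the eliminator $J(d)$; the commutativity of the upper triangle in the lift gives the computation rule $J(d) \circ \mathbf{refl}_A = d$ strictly.

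The main subtlety I anticipate is not any single lifting step, but the compatibility of this choice of factorization with substitution: the path object $\mathbf{Path}(p_A)$ is constructed by a (non-canonical) factorization, so pulling it back along a substitution $\Gamma' \to \Gamma$ need not equal on the nose the path object built for the substituted type $s^* A$. This is precisely the coherence issue the paper has flagged earlier; it is resolved by passing to the left-splitting of the resulting comprehension category in the sense of Lumsdaine--Warren, which replaces the weakly stable assignment by a strictly stable one while preserving the universal properties demonstrated above.
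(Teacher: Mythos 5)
Your proposal is correct and follows essentially the same route as the paper: interpret $Id_A$ by the fibration part of the (trivial cofibration, fibration) factorization of the dependent diagonal, take $\mathbf{refl}_A$ as the first factor, and obtain $J$ by lifting $\mathbf{refl}_A$ (a trivial cofibration) against the fibration $p_C$, with the strict computation rule coming from the lifting triangle. Your closing remark on stability under substitution matches the paper's earlier appeal to the left-splitting of \cite{lw2015local}, which the paper treats as implicit rather than revisiting in this proof.
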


\begin{proof}
 The so defined identity type comes by definition with a fibration $p_{Id_A} = \mathbf{Path}(p_A) \to p_A^*A$ a section $\mathbf{refl}_A$ of $p_{p_A^*A} \circ p_{Id_A}$ (the first part of the factorization).
 
 Given a type $\Gamma, A, p_A^*A, Id_A \vdash C$ with terms given as a section $d: \Gamma, A \to \Gamma, A, p_A^*A, Id_A, C$ such that $p_C \circ d = \mathbf{refl}_A$, there is a section $J_{C,d}:  \Gamma, A, p_A^*A, Id_A \to \Gamma, A, p_A^*A, Id_A, C$ given as a solution of the following lifting problem (hence $J_{C,d} \circ \mathbf{refl}_A = d$).
 
\[\begin{tikzcd}
	{\Gamma, A} && {\Gamma, A, p_A^*A, Id_A, C} \\
	\\
	{\Gamma, A, p_A^*A, Id_A} && {\Gamma, A, p_A^*A, Id_A}
	\arrow["{p_C}", from=1-3, to=3-3]
	\arrow["{\mathbf{refl}_A}"', from=1-1, to=3-1]
	\arrow["id"', from=3-1, to=3-3]
	\arrow["d", from=1-1, to=1-3]
	\arrow["{J_{C,d}}"{description}, dashed, from=3-1, to=1-3]
\end{tikzcd}\]
\end{proof}

\begingroup
\setlength{\emergencystretch}{.5em}
\RaggedRight
\printbibliography
\endgroup

\end{document}